 \renewcommand{\equation}
\newtheorem{prop}{Proposition}[section]
\newtheorem{thm}{Theorem}[section]
\newtheorem{cor}{Corollary}[section]
\newtheorem{fact}{Fact}[section]
\newtheorem{corresp*}{Correspondence}[section]
\newtheorem{facts*}{Facts}[section]
\theoremstyle{definition}
\newtheorem{rmk}{Remark}[section]
\newtheorem{defini}{Definition}[section]
\title{The existence of homologically fibered links and solutions of some equations}
\author{Nozomu Sekino}
\date{}
\begin{document}
\maketitle

\begin{abstract}
There is one generalization of fibered links in 3-manifolds, called homologically fibered links. 
It is known that the existence of a homologically fibered link whose fiber surface has a given homeomorphic type is determined by the first homology group and its torsion linking form of the ambient 3-manifold. 
In this paper, we interpret the existence of homologically fibered links with that of a solution of some equation, in terms of the first homology group and its torsion linking form or a surgery diagram of the ambient manifold. 
As an application, we compute the invariant ${\rm hc}(\cdot)$, defined through homologically fibered knots, for 3-manifolds whose torsion linkng forms represent a generator of linkings. 
\end{abstract}

%section1
\section{Introduction}
It is known that every connected orientable closed 3-manifold $M$ has a fibered link \cite{alexander}, i.e., a link $L$ in $M$ such that the complement of a small open neighborhood of $L$ admits a structure of a connected orientable compact surface bundle over $S^1$ and that each boundary component of a fiber surface is a longitude of a component of $L$. 
Moreover, it is known that every connected orientable closed 3-manifold $M$ has a fibered knot \cite{myers}. 
However, finding fibered links in a given 3-manifold is difficult in general. For example ${\rm op}(M)$, defined as the minimal genus of fiber surfaces of fibered knots which $M$ has, is a topological invariant which is difficult to calculate. 
There is one generalization of fibered links, called {\it homologically fibered links} \cite{goda_sakasai} (defined in Definition~\ref{homologycob} below).
A homologically fibered link requests some Seifert surface such that the result of cut a 3-manifold along the surface is a homological product of a surface and an interval,
 whereas a fibered link requests some Seifert surface such that the result of cutting a 3-manifold along the surface is the product of a surface and an interval. 
Clearly, a fibered link is also a homologically fibered link,
 and ${\rm hc}(M)$ \cite{goda_sakasai}, defined as the minimal genus of homological fiber surfaces of homologically fibered knots which $M$ has gives a lower bound of ${\rm op}(M)$. 
Homological products of surfaces (of a fixed homeomorphic type) and intervals are not merely the homological constraint for fiber structures, but they form a monoid by stacking, which contains the mapping class group as a submonoid (see \cite{goda_sakasai}). 
This attracts attention as a generalization of the mapping class groups. 
Homological products of surfaces and intervals are studied using ``clasper theory''. 
As a consequence of clasper theory,  it is known that the existence of a homologically fibered link whose fiber is a given homeomorphic type in $M$ depends on the first homology group of $M$ with its torsion linking form. 
In this paper, we interpret the existence of a homologically fibered link whose fiber is a given homeomorphic type with the existence of a solution for some equation as follows. \\

As a convention, for an $m\times m$-matrix $A$ and an $n\times n$-matrix $B$, $A\oplus B$ is an $(m+n)\times(m+n)$-matrix whose $(i,j)$-entry is the same as the $(i,j)$-entry of $A$ for $1\leq i,j\leq m$, the $(m+i',m+j')$-entry is the same as the $(i',j')$-entry of $B$ for $1\leq i',j'\leq n$, and the other entries are $0$.\\
Let $O_{r}$ be an $r\times r$-zero-matrix, 
$I_{r}$ an $r\times r$-identity matrix, and we regard $0\times 0$-matrix ($O_{0}$ and $I_{0}$) as the identity element for $\oplus$. \\
Let $P$, $Q$, $F^{k}_{0}$, $F^{k}_{1}$ and $G^{k}_{1}$ be matrices as follows, where $a\geq 0$ and $k\geq 0$:
\begin{itemize}
 \item $P$ is a diagonal $a\times a$-matrix whose $(i,i)$-entry is $p_{i}\in \mathbb{Z}$ for $a>0$ or $I_{0}$ for $a=0$.
 \item $Q$ is a diagonal $a\times a$-matrix whose $(i,i)$-entry is $-q_{i}\in \mathbb{Z}$ for $a>0$ or $I_{0}$ for $a=0$.
 \item $F^{0}_{0}$, $F^{0}_{1}$ and $G^{0}_{1}$ to be $I_{0}$,\\ and
$F^{k}_{0}=
   \left( \begin{array}{cc}
   0   & 2^{k}   \\
   2^{k} & 0 \\
  \end{array} \right)   $ , \hspace{1.0cm}
           $F^{k}_{1}=
   \left( \begin{array}{cc}
   2^{k+1}   & -2^{k}   \\
   -3\cdot2^{k} & 2^{k+1} \\
  \end{array} \right)$ , \hspace{1.0cm}
           $G^{k}_{1}=
   \left( \begin{array}{cc}
   -1 &  0      \\
  0    & -3   \\
  \end{array} \right)$  for $k>0$.

\end{itemize}

Also let $S$ and $T$ be matrices as follows, where $r\geq0$, $e_{0}\geq 0$, $e_{1}\geq 0$, $k_{0}=k'_{0}=0$, $k_{i}\geq 1$ for $i\neq0$, and $k'_{j}\geq 2$ for $j\neq0$:
\begin{itemize}
 \item $S=O_{r}\oplus P\oplus (\bigoplus_{0\leq i\leq e_{0}}F^{k_{i}}_{0}) \oplus (\bigoplus_{0\leq j\leq e_{1}}F^{k'_{j}}_{1})$. 
 \item $T=I_{r}\oplus Q\oplus I_{2e_{0}} \oplus (\bigoplus_{0\leq j\leq e_{1}}G^{k'_{j}}_{1})$. 
\end{itemize}

As usual notation, $\Sigma_{g,n+1}$ denotes a connected orientable compact surface of genus $g$ with $n+1$ boundary components. 

\begin{thm} \label{thm}
Let $M$ be a connected closed oriented 3-manifold whose free part of the first homology group with integer coefficient is isomorphic to $\mathbb{Z}^{r}$ and whose torsion linking form is equivalent to $(\bigoplus_{0\leq l\leq a}A^{p_l}(q_l))\oplus (\bigoplus_{0\leq i \leq e_0} E^{k_i}_{0}) \oplus (\bigoplus_{0\leq j\leq e_1}E^{k'_{j}}_1)$, where $a,e_0,e_1$ are non-negative integers and notations $A^{p_l}(q_l), E^{k_i}_{0}, E^{k'_{j}}_{1}$ represent linkings, defined in Subsection~\ref{linkingpair}. \\
Suppose $(g,n)\neq (0,0)$. 
Then $M$ has a homologically fibered link whose homological fiber is homeomorphic to $\Sigma_{g,n+1}$ if and only if there exist $(r+a+2e_{0}+2e_{1}) \times (2g+n)$-matrix of integer coefficients $X$ and $(2g+n) \times (2g+n)$- symmetric matrix of integer coefficients $Y$ satisfying the following equation:
\begin{eqnarray} \label{condition}
   \left| \begin{array}{cc}
     S    &  TX  \\
      X^{t}    &  Y+(\mathcal{E}\oplus O_{n}) \\
  \end{array} \right| = \pm 1,
\end{eqnarray}
where $\mathcal{E}$ is a $2g\times 2g$-matrix whose $(2i-1,2i)$-entry is $1$ for every $1\leq i\leq g$ and the others are $0$.
\end{thm}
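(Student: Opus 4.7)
The strategy is to first reduce to a convenient model of $M$ dictated by $S$ and $T$, then translate the homological--fibration condition for a Seifert surface into a presentation of a single abelian group whose triviality is recorded by (\ref{condition}).

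First, by the clasper--theoretic result cited in the introduction, the existence of a homologically fibered link with homological fiber $\Sigma_{g,n+1}$ depends only on the pair $(H_{1}(M;\mathbb{Z}),\lambda_{M})$, so I may replace $M$ by any convenient model $M_{0}$ with the same first homology and torsion linking form. I would construct $M_{0}$ by integer surgery on a framed link $L_{0}\subset S^{3}$ whose components are grouped according to the block decomposition of $\lambda_{M}$: $r$ zero--framed unknots realizing $\mathbb{Z}^{r}$; pairs of linked circles realizing each cyclic $A^{p_{l}}(q_{l})$ summand with the data $(p_{l},-q_{l})$; and two--component sublinks realizing each $E^{k_{i}}_{0}$ or $E^{k'_{j}}_{1}$ summand according to the patterns $F^{k_{i}}_{0}/I_{2}$ and $F^{k'_{j}}_{1}/G^{k'_{j}}_{1}$ respectively. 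The pair $(S,T)$ is then precisely the ``numerator / denominator'' presentation of $\lambda_{M_{0}}$ coming from this Kirby diagram.

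Second, given a link $L$ in $M_{0}$ with Seifert surface $F\cong\Sigma_{g,n+1}$, the definition of homologically fibered demands that the cut--open manifold $M_{0}\setminus F$ be a $\mathbb{Z}$--homology cobordism between its two boundary copies of $F$. This is equivalent to the push--off maps $H_{1}(F)\to H_{1}(M_{0}\setminus F)$ being isomorphisms. Using Mayer--Vietoris applied to the decomposition of $M_{0}$ determined by the surgery link $L_{0}$ together with the surface $F$, I would write down an explicit presentation of the relevant $H_{1}$, working with a symplectic--plus--boundary basis $\alpha_{1},\beta_{1},\ldots,\alpha_{g},\beta_{g},\gamma_{1},\ldots,\gamma_{n}$ of $H_{1}(F)$. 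The block $\mathcal{E}\oplus O_{n}$ records the one--sided intersection pairing on this basis (the pairs $\alpha_{i}\cdot\beta_{i}=1$ giving $\mathcal{E}$, the boundary classes $\gamma_{i}$ contributing nothing); $X$ records the linking numbers between the basis of $H_{1}(F)$ and the meridians of $L_{0}$; and $Y$ records the ``one--sided'' Seifert form of a push--off of $F$ in $M_{0}$. Matching rows and columns carefully one sees that the full presentation matrix is exactly the block matrix appearing on the left--hand side of (\ref{condition}).

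Third, the condition that this integer square matrix present the trivial group, i.e.\ that its determinant be $\pm 1$, is precisely the condition that the push--offs be isomorphisms, which finishes both directions: in the necessary direction one reads $(X,Y)$ off from any given homologically fibered link; in the sufficient direction, given $(X,Y)$ satisfying (\ref{condition}) one realizes a surface in $M_{0}$ with the prescribed homological inclusion data and Seifert form by general position together with a band--sum correction, and the computation of Step~2 then shows that it is a homological fiber.

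The main obstacle lies in Step~2, namely the precise identification of the block entries. The non--symmetry of $S$ (forced by $F^{k}_{1}$) and the need to insert $T$ in front of $X$ in the off--diagonal block both reflect the conversion between ``linking measured in the surgery diagram for $L_{0}$'' and ``linking measured in $M_{0}$''; the bookkeeping for the $E^{k'_{j}}_{1}$ blocks, where $G^{k'_{j}}_{1}$ is non--trivial, is where the care is greatest. Once that identification is pinned down, the determinant $\pm 1$ criterion is essentially formal.
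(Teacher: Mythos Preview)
Your strategy is essentially the same as the paper's: reduce to a model manifold via the Borromean-surgery/clasper invariance (the paper's Proposition~2.1 and Proposition~3.1), then translate the homological-fiber condition into the requirement that a certain presentation matrix for $H_{1}$ of the surface complement have determinant $\pm 1$. The paper factors this through a more general statement (its Proposition~5.1) valid for \emph{any} surgery diagram, with matrices $\Phi,\Psi$ built from the surgery framings and linking numbers, and then specializes to the particular diagram of $\tilde{M}$ to obtain $S,T$; the identification of the off-diagonal blocks and of the $\mathcal{E}$-term is carried out via an explicit ``annuli and bands'' correspondence (Section~4) rather than a Mayer--Vietoris argument, which makes the converse direction (building a surface from given $X,Y$) concrete: one simply chooses framed knots with the prescribed linking numbers and attaches arbitrary bands.

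One small slip: each $A^{p_{l}}(q_{l})$ summand is realized in the paper by a \emph{single} unknot with rational surgery slope $p_{l}/(-q_{l})$, not by a pair of linked circles; this is why the matrix size is $r+a+2e_{0}+2e_{1}$ and why the corresponding blocks of $S,T$ are the $1\times 1$ entries $p_{l}$ and $-q_{l}$. With that corrected, your outline matches the paper's proof.
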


\begin{rmk}
For the case which is not covered by Theorem~\ref{thm},
 it is known that a connected closed orientable 3-manifold $M$ has a homologically fibered link whose homological fiber is homeomorphic to $\Sigma_{0,1}$ if and only if $M$ is an integral homology $3$-sphere. See the definitions in the Subsection~\ref{defs}.
\end{rmk}

In general, it is not easy to compute the torsion linking form of a 3-manifold. 
Sometimes surgery diagrams of a 3-manifold may be easy to handle. 
A statement in terms of surgery diagrams is given in Proposition~\ref{condition_surgery}.  

The rest of this paper is organized as follows: 
In Section~\ref{sec_homfiblinks}, we recall the definition of homologically fibered links and the fact of their dependence on torsion linking forms. 
In Section~\ref{sec_reps}, we recall three types of generators of linking forms and give rational homology 3-spheres whose torsion linking forms are the generators. 
In Section~\ref{surf_bands}, we give a correspondence between surfaces in 3-manifolds and tuples of annuli and bands for Theorem~\ref{thm}.  
In Section~\ref{proof}, we give a proof of Theorem \ref{thm}. 
In Section~\ref{sec_examples}, we compute ${\rm hc}(\cdot)$ for 3-manifolds whose torsion linking forms are two of three types of generators in Section~\ref{sec_reps}.

\subsection*{Acknowledgements}
The author would like to thank Takuya Sakasai and Yuta Nozaki for introducing him this subject and giving him many ingredients about it. 
He also is grateful to referee for his or her kindness, patience and correcting many mistakes in the proofs and the arguments.

%section2
\section{Homologically fibered links} \label{sec_homfiblinks}
In this section, we review the definition of homologically fibered links and the fact that the existence of homologically fibered links depends on the torsion linking forms. 
In this paper, the homology groups are with integer coefficients. 
\subsection{Definitions}\label{defs}
\begin{defini} (homology cobordism, {\cite[Section~2.4]{garoufalidis}})\\
A {\it homology cobordism over} $\Sigma_{g,n+1}$ is a triad ($X,\partial_{+}X,\partial_{-}X$), 
where $X$ is an oriented connected compact 3-manifold and $\partial_{+}X \cup \partial_{-}X$ is a partition of $\partial X$, and $\partial_{\pm} X$ are homeomorphic to $\Sigma_{g,n+1}$ satisfying:
\begin{itemize}
  \item $\partial_{+}X \cup \partial_{-}X =  \partial X$.
  \item $\partial_{+}X \cap \partial_{-}X =  \partial(\partial_{+}X)$.
  \item The induced maps $(i_{\pm})_{*}: H_{*}(\partial_{\pm} X) \rightarrow H_{*}(X)$ are isomorphisms, where $i_{\pm}: \partial_{\pm}X \rightarrow X$ are the inclusions.\\
\end{itemize}
\end{defini}

%Section 2.4 of \cite{garoufalidis}
Note that the third condition is equivalent to the condition that $i_{\pm}$ induce isomorphisms on $H_{1}(\cdot)$. 
Moreover by using the Poincar$\rm{\acute{e}}$-Lefschetz duality, we see that it is sufficient to require only one of $i_{+}$ and $i_{-}$ to induce an isomorphism on $H_{1}(\cdot)$. 

\begin{defini}\label{homologycob} \cite{goda_sakasai}
Let $L$ be a link in a closed orientable 3-manifold $M$. 
$L$ is called {\it homologically fibered link} if there exists a Seifert surface $F$ of $L$ such that $(M\setminus F, F_{+}, F_{-})$ is a homology cobordism over a surface homeomorphic to $F$, where $F_{\pm}$ are the cut ends. 
In this situation, we call $F$ a {\it homological fiber} of $L$.
\end{defini}

Clearly every fibered link is a homologically fibered link since the former requests that the manifold cut opened by some Seifert surface to be the product of an interval and a surface and the latter requests it to be the homological product of an interval and a surface. 
By the observations above the Definition~\ref{homologycob}, for a Seifert surface $F$ in a closed 3-manifold $M$ to be a homological fiber, it is enough to check that push-ups (or push-downs) of oriented simple closed curves on $F$ which form a basis of $H_{1}(F)$ also form a free basis of $H_{1}(M\setminus Int(F\times[0,1]))$. 

\begin{rmk}\label{lowerbound}
By definition, when a connected closed orientable 3-manifold $M$ has a homologically fibered link with homological fiber $F$, $H_{1}(M)$ can be generated by $1-\chi(F)$ elements, where $\chi(F)$ is the Euler number of $F$. 
Therefore the minimal order of generating sets for $H_{1}(M)$ imposes some constraint on homeomorphic types of homological fibers in $M$.
\end{rmk}

\subsection{Dependence on the torsion linking forms}

\begin{defini}(Torsion linking form)\\
Let $M$ be a connected closed oriented 3-manifold, $TH_{1}(M)$ the torsion part of $H_{1}(M)$. 
For every $a\in TH_{1}(M)$, there is a non-zero integer $n$ such that $na$ vanishes in $H_{1}(M)$, and we fix such an integer $n_a$ of minimal absolute value. 
This $a$ has a representative as oriented curves $L_a$ in $M$ such that the $n_a$-parallel copy of $L_a$ bounds a surface $S_{a}$ in $M$. 
The {\it torsion linking form} of $M$, $\psi_{M} : TH_{1}(M)\times TH_{1}(M) \longrightarrow \mathbb{Q}/\mathbb{Z}$ maps $(a,b)$ to $\frac{<S_{a}, L_{b}>}{n_a}\ {\rm mod} \ \mathbb{Z} $, where $<S_{a}, L_{b}>$ represents the algebraic intersection number of $S_a$ and $L_b$. 
It is known that $\psi_{M}$ is well-defined and is non-singular, symmetric bilinear pairing, and that $\psi_{-M}=-\psi_{M}$, where $-M$ denotes the manifold homeomorphic to $M$ with the opposite orientation.
\end{defini}

There is the operation of 3-manifolds preserving the first homology groups and the torsion linking forms, called the ``Borromean surgery'' introduced in \cite{matveev}. 

\begin{defini}
Let $V$ be a standard handlebody of genus three in $S^3$ which contains $0$-framed link as in Figure \ref{borromeansurg}. 
Consider a compact orientable 3-manifold $M$ and an embedding $f : V\longrightarrow M$. 
Let $N$ be a manifold obtained as a result of the Dehn surgery on $M$ along the framed link in $f(V)$. 
This $N$ is called the manifold obtained from $M$ (and $f$) by a {\it Borromean surgery}. 
\end{defini}

\begin{figure}[htbp]
 \begin{center}
  \includegraphics[width=30mm]{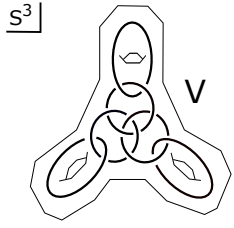}
 \end{center}
 \caption{A standard handlebody of genus three containing a $0$-framed link}
 \label{borromeansurg}
\end{figure}

About Borromean surgeries, the following facts are known:

\begin{fact}\label{borrom1}{\rm \cite{habiro}, \cite{massuyeau}}
Let $X$ be a homology cobordism over $\Sigma_{g,n+1}$ and $Y$ a manifold obtained from $X$ by a Borromean surgery. 
Then $Y$ is also a homology cobordism over $\Sigma_{g,n+1}$.
\end{fact}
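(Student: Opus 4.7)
The plan is to show that a Borromean surgery affects $H_1$ and the inclusion of $\partial_\pm X$ in an essentially trivial way, so that the homology-cobordism condition transfers from $X$ to $Y$. The first observation is that the surgery is carried out inside the interior of the embedded handlebody $f(V)\subset X$, so $\partial Y=\partial X$ as 2-manifolds and the partition into $\partial_\pm Y$ coincides with the original partition of $\partial X$. Thus it is enough to exhibit an isomorphism $\varphi\colon H_1(X)\to H_1(Y)$ compatible with the inclusions of $\partial_\pm X=\partial_\pm Y$; combined with the fact that $(i_\pm^X)_{*}$ are isomorphisms, this forces $(i_\pm^Y)_{*}$ to be isomorphisms as well, which is exactly the homology-cobordism condition for $Y$.

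Second, set $U:=X\setminus\mathrm{int}(f(V))$ and let $V'$ denote the 3-manifold obtained from $V$ by $0$-surgery on the Borromean rings of Figure \ref{borromeansurg}, so that $Y=U\cup_{\partial V}V'$. The key local claim is that $V'$ is again a genus-three handlebody with $\partial V'=\partial V$, and that the inclusion $\partial V\hookrightarrow V'$ induces the same map on $H_1$ as $\partial V\hookrightarrow V$. In the standard symplectic basis $\{a_1,b_1,a_2,b_2,a_3,b_3\}$ of $H_1(\partial V)$ one checks directly that in both cases the $a_i$'s go to generators of $H_1$ while the $b_i$'s die; this is visible from the figure because the three Borromean components are pairwise unlinked and carry framing $0$. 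This local computation is the arithmetical core of Matveev's original observation that Borromean surgery preserves $H_1$ and the torsion linking form.

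Third, apply Mayer--Vietoris to the decompositions $X=U\cup V$ and $Y=U\cup V'$. Because $H_{*}(V)$ and $H_{*}(V')$ agree and the two boundary-inclusion maps into them coincide under the identification $\partial V=\partial V'$, comparing the two Mayer--Vietoris sequences produces a canonical isomorphism $\varphi\colon H_1(X)\to H_1(Y)$ that fits into a commutative diagram with the inclusions of $\partial_\pm X=\partial_\pm Y$. Together with the first step this yields the conclusion.

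The main obstacle will be the local homological claim comparing $V\hookleftarrow\partial V\hookrightarrow V'$ with $V\hookleftarrow\partial V\hookrightarrow V$: all the distinctive geometry of Borromean surgery is concentrated here, and this is where one genuinely uses that the framed link of Figure \ref{borromeansurg} is Borromean (rather than, say, Hopf-linked) and $0$-framed. Once this local fact is granted, everything around it is a formal Mayer--Vietoris diagram chase, and no global information about the ambient 3-manifold $X$ is used beyond the hypothesis that it is a homology cobordism.
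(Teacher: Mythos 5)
The paper does not prove this statement at all: it is quoted as a known result from \cite{habiro} and \cite{massuyeau}, so there is no internal argument to compare yours against. Judged on its own terms, your overall architecture is the standard one (localize the surgery to the handlebody $f(V)$, compare $V$ with the surgered piece $V'$ via Mayer--Vietoris and the five lemma, and use the paper's remark that the homology-cobordism condition only needs to be checked on $H_1$). That part is fine.

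The genuine gap is in the ``key local claim,'' which is where the entire content of the theorem lives and which you essentially assert rather than prove. The justification you offer --- that the three components are pairwise unlinked and $0$-framed, so the $a_i$ survive and the $b_i$ die --- is not sufficient, and as stated it would ``prove'' the same conclusion for links for which it is false. Concretely: the three cores of the handles of a standardly embedded genus-three handlebody form a $0$-framed, pairwise unlinked link, yet $0$-surgery on them returns a genus-three handlebody in which the roles of the $a_i$ and $b_i$ are exactly interchanged (each former longitude now bounds a meridian disk), so the map $H_1(\partial V)\to H_1(V')$ is completely different from $H_1(\partial V)\to H_1(V)$. What the argument actually requires is the specific homological position of the link of Figure~\ref{borromeansurg} inside $V$: each component, together with its framed parallel, is null-homologous in the complement of the other two components \emph{inside} $V$ (each bounds a once-punctured torus there), so that the three surgery slopes generate the same subgroup of $H_1(V\setminus L)$ as the three meridians; only then do $H_1(V)$ and $H_1(V')$ become canonically identified compatibly with $H_1(\partial V)$. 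This computation is the arithmetical core you must actually carry out, and it uses the Borromean pattern in an essential way (pairwise unlinkedness alone sees only the vanishing linking matrix, which, as the example above shows, is compatible with the statement failing). Two smaller points: your assertion that $V'$ is again a genus-three handlebody is neither needed nor justified --- the references only establish, and the proof only uses, that $V'$ is a \emph{homology} handlebody with the same boundary map; and the five-lemma comparison of the two Mayer--Vietoris sequences also requires matching the $H_2$ and $H_0$ terms, which should at least be recorded.
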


\begin{fact} \label{borrom2} {\rm \cite{matveev}}
Let $M$ and $N$ be closed oriented 3-manifolds. 
Then the following are equivalent:
\begin{itemize}
\item There is an isomorphism between $H_1(M)$ and $H_1(N)$ such that $\psi_{M}$ and $\psi_{N}$ are equivalent under this isomorphism.
\item $M$ is obtained from $N$ by a finite sequence of Borromean surgeries.
\end{itemize}
\end{fact}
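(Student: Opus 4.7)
The plan is to prove the theorem by an explicit surgery-diagram computation that turns the homological fiber condition into a determinant condition on a presentation matrix. First I would realize $(H_1(M),\psi_M)$ by an explicit model manifold $M_0$: each torsion summand $A^{p_l}(q_l)$, $E^{k_i}_{0}$, $E^{k'_j}_{1}$ is realized by a rational homology sphere built in Section~\ref{sec_reps} from a specific integer-surgery link, while the free part $\mathbb{Z}^{r}$ is realized by $r$ copies of $S^{1}\times S^{2}$, i.e.\ $r$ zero-framed unknots. Concatenating these, the linking matrix of the resulting surgery link $L_0\subset S^{3}$ is precisely $S$, and $T$ records, for each surgery component, a distinguished longitude used to recover $\psi_M$ as $TS^{-1}\ (\mathrm{mod}\ \mathbb{Z})$ on the torsion part. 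By Facts~\ref{borrom1} and~\ref{borrom2} the existence of a homologically fibered link with fiber $\Sigma_{g,n+1}$ depends only on $(H_1(M),\psi_M)$, so it suffices to prove the theorem for $M_0$.

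Next I would invoke the correspondence of Section~\ref{surf_bands} to replace an arbitrary Seifert surface $F\cong\Sigma_{g,n+1}$ of a link in $M_0$ by a band-and-annulus picture in the complement of $L_0$. Choose a basis of $H_1(F)$ consisting of a symplectic basis on the genus part together with $n$ of the boundary-parallel loops. From the picture read off two integer matrices: $X$, whose entries are the linking numbers of the basis curves against the components of $L_0$, and a symmetric $Y$, whose entries are the linking numbers among the push-offs of the basis curves. A Mayer-Vietoris computation for the decomposition of $M_0\setminus F\times[0,1]$ into a tubular neighborhood of $L_0$ and the complement of $F\times[0,1]$ then exhibits, in the basis of meridians of $L_0$ together with push-offs of the basis of $H_1(F)$, the presentation matrix
\[
 \begin{pmatrix} S & TX \\ X^{t} & Y+(\mathcal{E}\oplus O_{n}) \end{pmatrix},
\]
where $\mathcal{E}\oplus O_n$ is the algebraic intersection form on $\Sigma_{g,n+1}$ in the chosen basis. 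By the criterion recalled below Definition~\ref{homologycob}, $F$ is a homological fiber exactly when the push-offs freely generate $H_1(M_0\setminus F\times[0,1])$, and this occurs precisely when the displayed matrix is unimodular, i.e.\ has determinant $\pm 1$. Running the band-annulus correspondence backwards produces, from any pair $(X,Y)$ satisfying (\ref{condition}), a concrete Seifert surface realizing the prescribed linking data, giving the converse direction.

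The main obstacle I anticipate lies in pinning down the block form above, and in particular its two subtle features: the diagonal correction $\mathcal{E}\oplus O_n$ and the asymmetric appearance of $T$. The correction arises because push-offs of two basis curves that intersect transversally on $F$ link in $M_0$ by that intersection pairing, which on the genus part gives $\mathcal{E}$, while the $n$ boundary-parallel loops are pairwise disjoint on $F$ and hence contribute a zero block. The asymmetry ($T$ appearing on the upper right but not on the lower left) reflects the different roles of meridians and longitudes of $L_0$ in the Mayer-Vietoris relations: meridians appear as generators while longitudes only enter through $T$-weighted linking with the surface basis. Getting signs, orientations, and push-up/push-down conventions mutually consistent throughout the genus-part basis, the $n$ boundary-parallel loops, and the components of $L_0$ is the technical heart of the argument.
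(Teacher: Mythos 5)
Your proposal does not address the statement it is supposed to prove. The statement is Matveev's theorem (Fact~\ref{borrom2}): for closed oriented $3$-manifolds $M$ and $N$, the existence of an isomorphism $H_1(M)\cong H_1(N)$ carrying $\psi_M$ to $\psi_N$ is equivalent to $M$ being obtained from $N$ by a finite sequence of Borromean surgeries. What you have written is instead an outline of the proof of Theorem~\ref{thm}, the determinant criterion for the existence of homologically fibered links. Worse, your argument explicitly \emph{invokes} Fact~\ref{borrom2} (``By Facts~\ref{borrom1} and~\ref{borrom2} the existence \dots depends only on $(H_1(M),\psi_M)$''), so even if one tried to read it as bearing on the statement at hand it would be circular. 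Note that the paper itself offers no proof of this Fact either; it is quoted from \cite{matveev}.

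A genuine proof would have two halves, neither of which appears in your text. The easy direction is to check that a single Borromean surgery induces a canonical isomorphism on $H_1$ preserving the torsion linking form; this is a local computation in the genus-three handlebody $V$ of Figure~\ref{borromeansurg}, using that the surgered link is nullhomologous in $V$ with trivial pairwise linking, so meridional classes and the linking data of curves outside $f(V)$ are unchanged. The hard direction is the converse: one must show that any two manifolds with isomorphic $H_1$ and equivalent linking forms can be joined by Borromean surgeries. This requires, for instance, reducing both manifolds to standard surgery presentations realizing the given linking pairing (using the algebraic classification of linkings as in \cite{wall} and \cite{kawauchi_kojima}), and then showing that the moves relating two such presentations --- handle slides and stabilizations that preserve the linking form --- can each be effected by Borromean surgeries. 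None of this machinery is present in your proposal, so as a proof of Fact~\ref{borrom2} it is not a partial argument with a gap; it is an argument for a different theorem.
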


\vspace{0.5cm}
By the facts above, we can see the dependence on the first homology groups and the torsion linking forms about the existence of homologically fibered links: 
\begin{prop}\label{dependence}
Suppose that for connected closed oriented 3-manifolds $M$ and $N$, there exists an isomorphism between their first homology groups such that their torsion linking forms are equivalent under the isomorphism. 
Then $N$ has a homologically fibered link whose homological fiber is homeomorphic to $\Sigma_{g,n+1}$ if and only if $M$ has a homologically fibered link whose homological fiber is homeomorphic to $\Sigma_{g,n+1}$.
\end{prop}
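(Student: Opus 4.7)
The plan is to combine Facts~\ref{borrom1} and~\ref{borrom2}. By Fact~\ref{borrom2}, the hypothesis that $M$ and $N$ have isomorphic first homology groups with equivalent torsion linking forms is equivalent to $N$ being obtained from $M$ by a finite sequence of Borromean surgeries, a relation which is symmetric between closed oriented 3-manifolds. It therefore suffices to prove the following single-step claim and iterate (in either direction) along such a sequence: \emph{if $M$ has a homologically fibered link with homological fiber $\Sigma_{g,n+1}$ and $N$ is obtained from $M$ by a single Borromean surgery, then $N$ also has such a link.}

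To prove the single-step claim, let $L\subset M$ be a homologically fibered link with homological fiber $F\cong\Sigma_{g,n+1}$ and set $X:=M\setminus\mathrm{int}(F\times[0,1])$, which by Definition~\ref{homologycob} is a homology cobordism over $\Sigma_{g,n+1}$ (and in particular connected). The Borromean surgery producing $N$ is specified by an embedding $f\colon V\hookrightarrow M$ of the standard genus-three handlebody. I would localize the surgery into $X$ as follows: $V$ deformation retracts onto a 1-dimensional spine $\Gamma$, so it suffices to isotope $f(\Gamma)$ to be disjoint from the surface $F$ and then replace $V$ by a sufficiently small regular neighborhood of the perturbed spine. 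This isotopy exists because $M\setminus F$ is connected (as $X$ is), so each transverse intersection point of $f(\Gamma)$ with $F$ can be removed by rerouting the corresponding arc of $\Gamma$ through $M\setminus F$. Because the isotopy is ambient in $M$, it does not change the homeomorphism type of the surgered manifold, so we may assume $f(V)\subset\mathrm{int}(X)$ from the start.

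With the Borromean surgery carried out entirely inside $X$, Fact~\ref{borrom1} gives that the new cobordism $X'$ is again a homology cobordism over $\Sigma_{g,n+1}$. Since the surgery leaves the collar $F\times[0,1]$ untouched, the closed manifold $N$ decomposes as $X'\cup (F\times[0,1])$, and $L$ becomes a homologically fibered link in $N$ with homological fiber homeomorphic to $F$. The main obstacle is the isotopy step: one must carefully verify that a Borromean surgery can always be relocated into the complement of a prescribed Seifert surface, and the crucial ingredient beyond general position is the connectedness of $M\setminus F$, which is precisely what the definition of a homological fiber guarantees.
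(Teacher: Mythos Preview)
Your argument follows the same strategy as the paper: reduce to a single Borromean surgery via Fact~\ref{borrom2}, arrange that the surgery region and the homological fiber $F$ are disjoint, then invoke Fact~\ref{borrom1}. The only variation is which object gets moved---you isotope the spine $\Gamma$ of $V$ off the surface $F$, whereas the paper isotopes the spine of $F$ off the surgery link. The paper's choice is cleaner because both objects are then $1$-complexes in a $3$-manifold, so general position alone makes them disjoint.

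Your isotopy step has a gap as written. Connectedness of $M\setminus F$ provides a path $\gamma\subset M\setminus F$ joining the endpoints of a short sub-arc $\alpha\subset\Gamma$ crossing $F$, but ``rerouting'' $\alpha$ to $\gamma$ is not in general realizable by an isotopy of $\Gamma$: the loop $\alpha*\gamma^{-1}$ may be nontrivial in $H_{1}(M)$, so the modified graph can land in a different isotopy class. Indeed, for a closed non-separating surface the conclusion fails outright (take $M=T^{3}$, $F=\{*\}\times T^{2}$, $\Gamma=S^{1}\times\{*\}$; here $M\setminus F$ is connected yet $\Gamma$ cannot be isotoped off $F$). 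What actually makes the move work in your situation is that $F\cong\Sigma_{g,n+1}$ has nonempty boundary: each transverse intersection point can be pushed along an arc in $F$ out to $\partial F$ and then slid off by a finger move. With this correction your proof goes through; alternatively, sidestep the issue by isotoping $F$'s $1$-dimensional spine off the surgery link instead, as the paper does.
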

\begin{proof}
%(The proof from [????])\\
We prove the if part and note that it is enough. 
By Fact~\ref{borrom2}, there is a finite sequence of Borromean surgeries starting from $M$ and ending at $N$. 
Since it is enough to prove when $N$ is obtained from $M$ by one Borromean surgery,
 we assume that. 
Suppose $M$ has a homological fibered link with homological fiber $F$ which is homeomorphic to $\Sigma_{g,n+1}$. 
Take a spine $T$ of $F$ and assume that $F$ lies in a small neighborhood of $T$. 
Isotope $F$ so that it is disjoint from the surgery link of the Borromean surgery. 
This can be done since $T$ and the surgery link are graphs in a 3-manifold. 
Let $F'$ be a surface, which is the image of $F$ after the Borromean surgery. 
Since $F$ is disjoint from the surgery link, a triad $(N\setminus F', F'^{+},F'^{-})$ is obtained from $(M\setminus F, F^{+},F^{-})$ by the Borromean surgery. 
Then by Fact~\ref{borrom1}, $(N\setminus F', F'^{+},F'^{-})$ is a homology cobordism over $\Sigma_{g,n+1}$. 
This implies that $N$ has a homologically fibered link with homological fiber $F'$ which is homeomorphic to $\Sigma_{g,n+1}$. 
\end{proof}

Note that if a connected closed oriented 3-manifold $M$ has a homologically fibered link whose homological fiber is homeomorphic to $\Sigma_{g,n+1}$, then $-M$ also has such a homologically fibered link. 
By Proposition~\ref{dependence}, for a connected closed oriented 3-manifold $M$ containing a homologically fibered link with  homological fiber of a given homeomorphic type we can replace $M$ with another one whose first homology group with the torsion linking form is the same as arbitrary one of $M$ and $-M$.

%section3
\section{Representatives with respect to the first homology groups and their torsion linking forms} \label{sec_reps}
In this section, at first we recall a part of the result of \cite{wall}, which gives a generator of the semigroup of the linkings on finite abelian groups, and next 
we fix representatives of 3-manifolds with respect to the first homology groups and their torsion linking forms. 
In fact, representatives are given in \cite{kawauchi_kojima}. 
We give another representatives for our use. Note that they may coincide. 

\subsection{Linking pair}\label{linkingpair}
A {\it linking} is a pair $(G,\psi)$ such that $G$ is a finite abelian group and $\psi$ is a non-singular, symmetric bilinear pairing $G\times G\longrightarrow \mathbb{Q}/\mathbb{Z}$. 
Sometimes $\psi$ is called a linking on $G$. 
We call two linkings $(G,\psi)$ and $(G',\psi')$ equivalent if there exists a group isomorphism between $G$ and $G'$ through which $\psi$ and $\psi'$ are equivalent. 
By fixing a basis of $G$, $\psi$ is represented as a non-singular, symmetric matrix with coefficients in $\mathbb{Q}/\mathbb{Z}$, whose $(i,j)$-entry is the image of the $i$-th element and the $j$-th element of the basis under $\psi$. 
For two likings $(G_1,\psi_1)$ and $(G_2,\psi_2)$, define the product as $(G_1\oplus G_2, \psi_1\oplus \psi_2)$, where $\psi_1\oplus \psi_2$ maps $\left( (g_1,g_2),(h_1,h_2)\right)$ to $\psi_1(g_1,h_1)+\psi_2(g_2,h_2)$ for every $g_{1}, h_{1}\in G_{1}$ and $g_{2}, h_{2}\in G_{2}$. 
Under this product, linkings form an abelian semigroup $\mathcal{R}$. 
For a connected closed orientable 3-manifold $M$, its torsion linking form, $(TH_1(M), \psi_{M})$ is an example of linkings. 
Moreover, for two connected closed oriented 3-manifolds $M$ and $N$, the torsion linking form of $M\# N$ is $\psi_{M}\oplus \psi_{N}$. 
In \cite{wall}, a generator of $\mathcal{R}$ is given as following:

\begin{fact}\label{gen_of_r} {\rm \cite{wall}}
\begin{itemize}
\item Let $A^{p}(q)$ be a linking on $\mathbb{Z}/p\mathbb{Z}$ (with a generator $1\in \mathbb{Z}/p\mathbb{Z}$) which is represented by 
   $\left( \begin{array}{c}
      \frac{q}{p}  \\
    \end{array} \right)$
for coprime integers $p>1$ and $q$, and

\item let $E^{k}_{0}$ be a linking on $\mathbb{Z}/2^{k}\mathbb{Z} \oplus \mathbb{Z}/2^{k}\mathbb{Z}$ (with a basis $\{(1,0), (0,1)\}$) which is represented by 
   $\left( \begin{array}{cc}
   0       &  2^{-k}  \\
   2^{-k}  &    0   \\
  \end{array} \right)$
for every integer $k>0$, and

\item let $E^{k}_{1}$ be a linking on $\mathbb{Z}/2^{k}\mathbb{Z} \oplus \mathbb{Z}/2^{k}\mathbb{Z}$ (with a basis $\{(1,0), (0,1)\}$) which is represented by 
   $\left( \begin{array}{cc}
   2^{1-k}       &  2^{-k}  \\
   2^{-k}  &    2^{1-k}   \\
  \end{array} \right)$
for every integer $k>1$.
\end{itemize}
Then $\mathcal{R}$ is generated by $A^{p}(q)$, $E^{k}_{0}$ and $E^{k}_{1}$.
\end{fact}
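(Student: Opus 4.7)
The plan is to deduce the fact from the structure theory of non-singular symmetric bilinear pairings on finite abelian groups, working prime by prime. First I would reduce to the case where $G$ is a $p$-group, then treat odd $p$ by diagonalization and $p=2$ by splitting off rank-one or rank-two orthogonal summands.

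Step one is orthogonality of the primary decomposition. For any linking $(G,\psi)$, write $G=\bigoplus_{p}G_{p}$ as the sum of its $p$-primary components. If $x\in G_{p}$ and $y\in G_{q}$ with $p\neq q$, then $\psi(x,y)\in\mathbb{Q}/\mathbb{Z}$ is annihilated by both the order of $x$ and the order of $y$, which are coprime, hence $\psi(x,y)=0$. So $(G,\psi)=\bigoplus_{p}(G_{p},\psi|_{G_{p}})$ in $\mathcal{R}$ and it suffices to decompose each $p$-primary summand into the claimed generators.

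Step two handles odd primes. For $p$ odd I would prove by induction on $|G|$ that $(G,\psi)$ splits as an orthogonal sum of pieces $A^{p^{k}}(q)$. The key lemma is: there is an element $x\in G$ of maximal order $p^{k}$ with $\psi(x,x)$ of order $p^{k}$. Granted this, the map $\psi(x,\cdot):G\to \mathbb{Q}/\mathbb{Z}$ restricted to $\langle x\rangle$ is already surjective onto $\tfrac{1}{p^{k}}\mathbb{Z}/\mathbb{Z}$, so $\langle x\rangle$ is an orthogonal direct summand with complement $\langle x\rangle^{\perp}$, and induction on $|G|$ finishes. To produce such an $x$: take any $x_{0}$ of maximal order; by non-singularity some $y$ satisfies $\psi(x_{0},y)=u/p^{k}$ with $\gcd(u,p)=1$; if $\psi(x_{0},x_{0})$ has smaller order, a suitable integer combination $x=ax_{0}+by$ has $\psi(x,x)$ of full order $p^{k}$, using that $2$ is invertible modulo $p^{k}$ for odd $p$.

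Step three handles $p=2$, which is the main obstacle. The same diagonalization works \emph{whenever} some element $x$ of maximal order $2^{k}$ satisfies $\psi(x,x)=u/2^{k}$ with $u$ odd, producing an $A^{2^{k}}(u)$ summand. The obstruction is precisely that $2$ is not a unit modulo $2^{k}$, so one may be stuck in the situation where $\psi(x,x)$ has order at most $2^{k-1}$ for every $x$ of maximal order $2^{k}$. In that case I would pick $x,y$ of maximal order with $\psi(x,y)$ of order $2^{k}$ (available by non-singularity) and show that $H:=\langle x,y\rangle\cong(\mathbb{Z}/2^{k})^{2}$ is an orthogonal direct summand, by exhibiting an explicit projection built from the Gram matrix (which is non-degenerate on $H$). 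A $\mathrm{GL}_{2}(\mathbb{Z})$-change of basis on $H$, combined with the constraint that all diagonal entries lie in $2\cdot\tfrac{1}{2^{k}}\mathbb{Z}/\mathbb{Z}$, puts the Gram matrix of $\psi|_{H}$ into the standard form $E^{k}_{0}$ or into the form $E^{k}_{1}$; the dichotomy is governed by the $\mathbb{Z}/2$-valued quadratic refinement and, when $k=1$, only $E^{1}_{0}$ occurs, matching the constraint $k\geq 2$ for $E^{k}_{1}$. The technical heart of the proof, and where I expect the most care is needed, is verifying that the rank-two summand always splits off orthogonally and that no third standard form is needed at $p=2$.
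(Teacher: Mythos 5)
The paper offers no proof of this statement: it is quoted verbatim from Wall's 1964 paper \cite{wall} as a known fact, so there is no internal argument to compare yours against. Judged on its own, your sketch is essentially Wall's original proof, and the outline is sound. Step one (orthogonality of the primary decomposition, since $\psi(x,y)$ is killed by the coprime orders of $x$ and $y$) is correct and complete. Step two is the standard argument at odd primes; the only point to make explicit is that once $\psi(x,x)$ has order $p^{k}$, the identity $\mathrm{ord}(x)\,\psi(x,x)=0$ automatically forces $x$ to have maximal order, so the element you build as $x_{0}+y$ needs no separate check. Step three correctly identifies the dichotomy at $p=2$; two points there deserve explicit justification: (i) that $\langle x,y\rangle\cong(\mathbb{Z}/2^{k}\mathbb{Z})^{2}$, which follows because the $2\times 2$ Gram matrix has determinant $(ab-u^{2})/2^{2k}$ with $ab-u^{2}$ odd, so the pulled-back pairing on $(\mathbb{Z}/2^{k}\mathbb{Z})^{2}$ is non-singular and the map onto $\langle x,y\rangle$ must be injective; and (ii) the $\mathrm{GL}_{2}(\mathbb{Z}/2^{k}\mathbb{Z})$ reduction of an even-diagonal, odd-off-diagonal form to $E^{k}_{0}$ or $E^{k}_{1}$, which you rightly flag as the technical heart --- it does go through (this is Wall's lemma) but is a genuine computation, not a formality. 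Note also that for the \emph{generation} statement you do not need the quadratic refinement or Gauss-sum invariant at all; that machinery is only needed to prove $E^{k}_{0}\not\cong E^{k}_{1}$ and to obtain the full presentation of $\mathcal{R}$, which is the separate result the paper attributes to \cite{wall} combined with \cite{kawauchi_kojima}.
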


Moreover, the presentation of $\mathcal{R}$ is revealed by combining the results in \cite{wall} and \cite{kawauchi_kojima}. 
As a notation we regard $A^{1}(q)$, $E^{0}_{0}$ and $E^{0}_{1}$ as $\phi$, the identity element of $\mathcal{R}$.\\
Thus every linking is represented as $(\oplus_{0\leq l\leq a}A^{p_l}(q_l))\oplus (\oplus_{0\leq i \leq e_0} E^{k_i}_{0}) \oplus (\oplus_{0\leq j\leq e_1}E^{k'_{j}}_1)$, where $a\geq 0$, $p_{0}=1$, $p_{l}$ is an integer greater than $1$ for $l\neq 0$, $q_{l}$ is an integer prime to $p_{l}$, and $e_{0}\geq 0$, $k_{0}=0$, $k_{i}>0$ for $i\neq 0$, and $e_{1}\geq 0$, $k'_{0}=0$, $k'_{j}>1$ for $j\neq 0$. Note that this representation is not unique.

\subsection{Representatives}
We give 3-manifolds representing the generators in Fact~\ref{gen_of_r}. 
On showing that the torsion linking forms of these are the generators, we adopt a method of calculating torsion linking forms of rational homology 3-spheres by using their Heegaard splittings \cite{conway}. 
At first, we review the method.

\subsubsection{Calculating torsion linking forms by using Heegaard splittings \cite{conway}} \label{cal_tor}
Let $M$ be a rational homology 3-sphere and $M=V\cup W$ a Heegaard splitting i.e. $M$ is obtained from two handlebodies of same genus, say $g$, $V$ and $W$ by pasting their boundaries by some orientation reversing homeomorphism, say $f: \partial V\longrightarrow \partial W$. 
We give $V$ and $W$ orientations as standard handlebodies in $\mathbb{R}^{3}$, and we give $M$ an orientation coming from $V$. 
Take a symplectic basis $\{a^{V}_1, \dots a^{V}_g, b^{V}_1, \dots, b^{V}_g\}$ of $H_1(\partial V)$ such that $b^{V}_i$ is $0$ in $H_1(V)$ for every $i$, where symplectic basis means that the intersection form on $\partial V$ satisfies $\langle a^{V}_i, a^{V}_j \rangle=0$, $\langle b^{V}_i,b^{V}_j \rangle=0$, and $\langle a^{V}_i,b^{V}_j \rangle =\delta_{i,j}$ for every $1\leq i,j\leq g$. 
Similarly, take a symplectic basis $\{a^{W}_1, \dots a^{W}_g, b^{W}_1, \dots, b^{W}_g\}$ of $H_1(\partial W)$ such that $b^{W}_i$ is $0$ in $H_1(W)$ for every $i$. 
Denote by 
   $\left( \begin{array}{cc}
   A       & B  \\
   C  &    D   \\
  \end{array} \right)$
the matrix representing $f_{*}$, the map between the first homology group induced by $f$ with respect to these bases, where $A$, $B$, $C$ and $D$ are $(g\times g)$-matrices over $\mathbb{Z}$. 

\begin{fact} \label{cfh} {\rm \cite{conway}}
In the situation above, $det(B)\neq 0$ and $H_1(M)$ is isomorphic to $\mathbb{Z}^{g}/B^{t}\mathbb{Z}^{g}$. 
Moreover, the torsion linking form $\psi_M$ is equivalent to $\mathbb{Z}^{g}/B^{t}\mathbb{Z}^{g} \times \mathbb{Z}^{g}/B^{t}\mathbb{Z}^{g} \longrightarrow \mathbb{Q}/\mathbb{Z}\ ; (v,w) \mapsto -v^{t}\left( B^{-1}A\right)w$. 
\end{fact}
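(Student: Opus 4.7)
The plan is to prove both conclusions through a Mayer--Vietoris calculation combined with an explicit geometric construction of a surface bounding a multiple of a given $1$-cycle.

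For the presentation of $H_{1}(M)$, I apply Mayer--Vietoris to the decomposition $M = V\cup W$ with intersection $\partial V$ identified to $\partial W$ via $f$. Using that $\{a_{1}^V,\ldots, a_{g}^V\}$ freely generates $H_{1}(V)\cong \mathbb{Z}^g$ (with $b_{i}^V$ vanishing there), and analogously for $W$, the connecting map $H_{1}(\partial V) \to H_{1}(V) \oplus H_{1}(W)$ becomes a $2g \times 2g$ integer matrix two of whose blocks are $A$ and $B$; a block row reduction collapses the cokernel to $\mathbb{Z}^g / B^{t}\mathbb{Z}^g$. Since $M$ is a rational homology sphere, $H_{1}(M)$ is finite, so $\det(B)\neq 0$ and $B^{-1}A$ is well-defined over $\mathbb{Q}$.

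For the torsion linking form, the class $[v]\in H_{1}(M)$ is represented by the curve $\gamma_{v}:=\sum_{i} v_{i} a_{i}^V$ pushed into the interior of $V$, while $[w]$ is represented by $\gamma'_{w}:=\sum_{j}(Aw)_{j} a_{j}^W$ pushed into the interior of $W$; the coefficients $(Aw)_{j}$ arise because $f_{*}(a_{i}^V)$ has $a_{j}^W$-component $A_{ji}$, so transporting $\sum w_{i} a_{i}^V$ across the Heegaard surface into $W$ yields $\sum_{j}(Aw)_{j} a_{j}^W$. To bound a multiple of $\gamma_{v}$, pick $n>0$ and $u\in\mathbb{Z}^g$ with $nv = B^{t}u$. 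The curve $f^{-1}(b_{j}^W)$ on $\partial V$ has $a^V$-components equal to the $j$-th column of $B^{t}$ (read off from the explicit form of $f_{*}^{-1}$ forced by $f$ being an orientation-reversing, hence anti-symplectic, homeomorphism), and on the other side of the Heegaard surface it bounds a meridian disk $D_{j}^W$ of $W$. Therefore $n\gamma_{v}$ is homologous in $V$ to $\sum_{j} u_{j} f^{-1}(b_{j}^W)$, producing a surface $S_{V}\subset V$ with $\partial S_{V} = n\gamma_{v} - \sum_{j} u_{j} f^{-1}(b_{j}^W)$; gluing on the disks $D_{j}^W$ with multiplicities $u_{j}$ yields a surface $S\subset M$ with $\partial S = n\gamma_{v}$.

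The intersection count $\langle S, \gamma'_{w}\rangle$ splits: $S_{V}$ is disjoint from $\gamma'_{w}$ (they lie in the interiors of opposite handlebodies), while each $D_{j}^W$ meets $\gamma'_{w}$ inside $W$ with signed count $\pm (Aw)_j$ coming from $\langle b_{j}^W, a_{i}^W\rangle_{\partial W} = \pm \delta_{ij}$. This gives $\langle S,\gamma'_{w}\rangle = \pm u^{t}Aw$; substituting $u = n(B^{t})^{-1}v$ and dividing by $n$ yields $\psi_{M}(v,w) = \pm v^{t}B^{-1}Aw \pmod{\mathbb{Z}}$, which becomes $-v^{t}B^{-1}Aw$ under the appropriate orientation conventions. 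Well-definedness modulo $\mathbb{Z}$ is automatic because $v\in B^{t}\mathbb{Z}^g$ makes $v^{t}B^{-1}Aw\in\mathbb{Z}$; symmetry follows from $AB^{t} = BA^{t}$ (a consequence of $f_{*}f_{*}^{-1}=I$ with the anti-symplectic structure), which implies $B^{-1}A$ is symmetric; non-singularity follows from $\det(B)\neq 0$.

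The main obstacle is sign bookkeeping. The gluing $f$ reverses orientation, so $f_{*}$ satisfies identities such as $A^{t}C = C^{t}A$, $B^{t}D = D^{t}B$, and $A^{t}D - C^{t}B = -I$, and these must be coordinated with the boundary orientations on $\partial V$ and $\partial W$, the sign in the Mayer--Vietoris connecting map, the orientation of $M$ inherited from $V$, and the direction of normal push-off used when intersecting $a_{i}^W$ with $D_{j}^W$. Ensuring the final sign is exactly $-v^{t}B^{-1}Aw$ (and not its negative) is the delicate part; the geometric content of the argument is exhausted by the surface construction above, and the rest is a careful orientation audit.
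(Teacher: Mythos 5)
The paper does not actually prove this statement: it is imported verbatim as a ``Fact'' from the cited reference of Conway--Friedl--Herrmann, and the only related material in the paper is the Remark explaining how to read the matrices $A$ and $B$ off a Heegaard diagram. So there is no in-paper proof to compare against; I can only judge your argument on its own terms.

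On its own terms, your argument is the standard (and, as far as I can tell, essentially the cited reference's) proof, and the mathematical content is sound. The Mayer--Vietoris computation correctly yields $H_1(M)\cong\operatorname{coker}\left(\begin{smallmatrix} I & 0\\ A & B\end{smallmatrix}\right)\cong\mathbb{Z}^g/B\mathbb{Z}^g$ on the $a^W$-generators (equivalently $\mathbb{Z}^g/B^t\mathbb{Z}^g$ on the $a^V$-generators, which is the presentation the Fact uses and which you implicitly adopt by writing classes in $a^V$-coordinates), and finiteness of $H_1(M)$ gives $\det B\neq 0$. Your surface construction is correct: $f_*^{-1}=Jf_*^tJ$ for an anti-symplectic $f_*$ does give $f^{-1}(b_j^W)$ the $a^V$-coordinates $B_{j\bullet}$, so $n\gamma_v-\sum_j u_jf^{-1}(b_j^W)$ bounds in $V$ when $nv=B^tu$, capping with meridian disks of $W$ gives $\partial S=n\gamma_v$, and the intersection with $\gamma'_w=\sum_j(Aw)_ja_j^W$ localizes on the disks to give $\pm u^tAw$, hence $\pm v^tB^{-1}Aw$ after dividing by $n$. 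Well-definedness and symmetry via $AB^t=BA^t$ are also right. The one genuine gap is the sign: the Fact asserts the specific value $-v^tB^{-1}Aw$, and you stop at $\pm v^tB^{-1}Aw$, deferring the determination to an unperformed ``orientation audit.'' This is not cosmetic for the present paper, which uses the sign downstream (e.g.\ it is what makes $L(p,q)$ realize $A^p(q)$ rather than $A^p(-q)$, and the distinction between $\psi_M$ and $\psi_{-M}=-\psi_M$ is exactly a sign). To close it you must fix once and for all the orientation of $M$ as coming from $V$, the induced boundary orientations, the sign convention in $\langle a_i^W,b_j^W\rangle$, and the coorientation of the meridian disks, and then trace a single intersection point; until that is done the statement as written is not fully proved.
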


\begin{rmk}
We review the procedure for the calculation in terms of Heegaard diagrams rather than gluing maps used in the following. 
Let $M=V\cup W$ be a genus $g$ Heegaard splitting of a closed oriented 3-manifold (not necessarily a rational homology three sphere), and $S$ the splitting surface. 
Give $S$ the orientation as $\partial V$. 
Take a family of pairwise disjoint oriented simple closed curves $\{b^{V}_{1},\dots,b^{V}_{g}\}$ on $S$ such that each $b^{V}_{i}$ bounds pairwise disjoint disks in $V$ and that these disks cut $V$ into a ball. 
Similarly, take a family of pairwise disjoint oriented simple closed curves $\{b^{W}_{1},\dots,b^{W}_{g}\}$ on $S$ such that each $b^{W}_{i}$ bounds pairwise disjoint disks in $W$ and that these disks cut $W$ into a ball. 
Note that the triplet $(S, \{b^{V}_{1},\dots,b^{V}_{g}\}, \{b^{W}_{1},\dots,b^{W}_{g}\})$ determines the homeomorphic type of $M=V\cup W$. 
It is well-known that we can compute $H_{1}(M)$ using the curves: $H_{1}(M)=\mathbb{Z}\langle x_{1},\dots ,x_{g}\rangle /  \left( \Sigma^{g}_{i=1} \langle b^{V}_{i}, b^{W}_{j}\rangle_{S} \cdot x_{i} \ {\rm for }\ 1\leq j \leq g  \right) $, where $\langle \cdot, \cdot \rangle_{S}$ is the algebraic intersection form on $S$. 
By this, we can see whether $M$ is a rational homology $3$-sphere or not. 
Suppose that $M$  is a rational homology $3$-sphere in the following. 
Choose a family of pairwise disjoint oriented simple closed curves $\{a^{V}_{1},\dots,a^{V}_{g}\}$ on $S$ such that $\langle a^{V}_i,b^{V}_j \rangle_{S} =\delta_{i,j}$ for every $1\leq i,j\leq g$. 
Similarly, choose a family of pairwise disjoint oriented simple closed curves $\{a^{W}_{1},\dots,a^{W}_{g}\}$ on $S$ such that $\langle a^{W}_i,b^{W}_j \rangle_{S} =-\delta_{i,j}$ for every $1\leq i,j\leq g$. 
Then the homology class of $\{a^{V}_{1},\dots,a^{V}_{g},b^{V}_{1},\dots,b^{V}_{g}\}$ in $S$ forms a symplectic basis of $H_{1}(\partial V)$ such that each $b^{V}_{i}$ vanishes in $H_{1}(V)$
, and the homology class of $\{a^{W}_{1},\dots,a^{W}_{g},b^{W}_{1},\dots,b^{W}_{g}\}$ in $S$ forms a symplectic basis of $H_{1}(\partial W)$ such that each $b^{W}_{i}$vanishes in $H_{1}(W)$. 
In this situation, we can compute the matrices $A$, $B$ using curves: 
The $(i,j)$-entry of $A$ is $\langle b^{W}_{i},a^{V}_{j}\rangle_{S}$, and the $(i,j)$-entry of $B$ is $\langle b^{W}_{i}, b^{V}_{j}\rangle_{S}$.  
\end{rmk}

\subsubsection{Representatives for $A^{p}(q)$}
 Let $M$ be a lens space of type $(p,q)$ for $p>1$. 
$M$ has a surgery presentation as in Figure~\ref{a^p_q_surg} and a Heegaard splitting $V\cup W$ as in Figure~\ref{a^p_q}. 
In Figure~\ref{a^p_q}, $V$ is the inner handlebody and $W$ is the outer handlebody,
 and a box containing $\frac{-q}{p}$ represents curves in it, a result of resolving the intersection points of horizontal $|q|$ lines and vertical $|p|$ lines so that they twist in left-hand (or right-hand) side if $\frac{-q}{p}>0$ (or $<0$, respectively). 
We take the $(p,-q)$-curve as $b^{V}_{1}$ and the $(0,1)$-curve as $b^{W}_{1}$ on the splitting torus. 
Note that the orientation of $L(p,q)$ coming from the standard one of $S^3$ corresponds to that coming from $V$. 
%We give $M$ the orientation coming from $W$, which may be different from the standard orientation on $L(p,-q)$. 
By computation, we see that $M$ is a rational homology 3-sphere. 
Let $a^{V}_1$ be the $(r,s)$-curve for $r,s$ such that $-rq-sp=1$. 
%Note that as the boundary of $V$, the algebraic intersection $\langle a_1, b_1\rangle=-1$ since the pasting map reverses the orientation. 
Then the matrices in \ref{cal_tor} are $A=\left( -r \right)$ and $B=\left(-p\right)$ by computing the algebraic intersections $\langle a^{V}_1, b^{W}_1 \rangle$ and $\langle b^{V}_1, b^{W}_1 \rangle$ on $\partial W$. Note that $\partial V=-\partial W$. 
Thus by Fact~\ref{cfh}, $H_1(M)=\mathbb{Z}/p\mathbb{Z}$ and the torsion linking form is $\left( \begin{array}{c}
   -\frac{r}{p}    \\
  \end{array} \right)$. 
Let $x$ be a generator of $H_1(M)=\mathbb{Z}/p\mathbb{Z}$ corresponding to the $1\times 1$-matrix $\left( \begin{array}{c}
   -\frac{r}{p}    \\
  \end{array} \right)$. 
Note that $qx$ is also a generator of $H_1(M)=\mathbb{Z}/p\mathbb{Z}$ since $p$ and $q$ are coprime. 
Under this new generator, the matrix representation of the torsion linking form is
 $\left( \begin{array}{c}
   -\frac{rq^{2}}{p}    \\
  \end{array} \right)=
\left( \begin{array}{c}
   \frac{q}{p}    \\
  \end{array} \right)$ since $rq \equiv -1$ mod $p$, the same as $A^{p}(q)$. 
Henceforth, $M(A^{p}(q))$ denotes $L(p,q)$. 

\begin{figure}[htbp]
 \begin{center}
  \includegraphics[width=25mm]{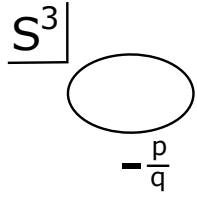}
 \end{center}
 \caption{A surgery description of $L(p,q)$}
 \label{a^p_q_surg}
\end{figure}

\begin{figure}[htbp]
 \begin{center}
  \includegraphics[width=125mm]{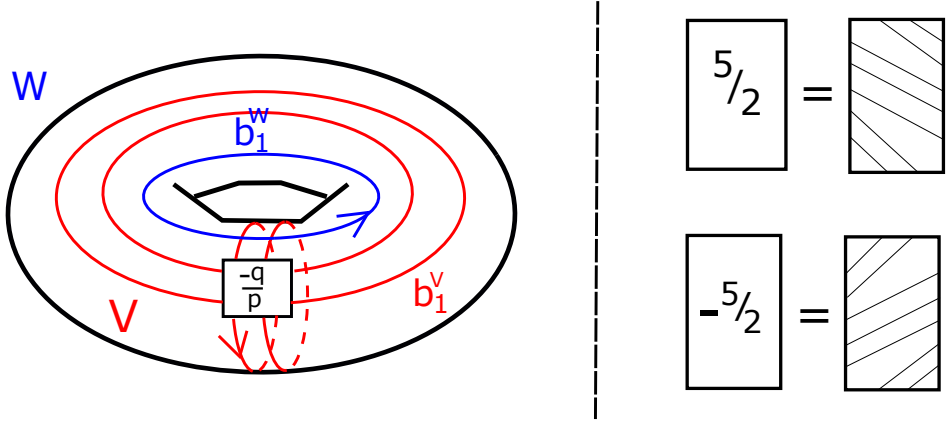}
 \end{center}
 \caption{left: A Heegaard splitting of $L(p,q)$\ \ \ \ right: Examples of boxes contain rational numbers}
 \label{a^p_q}
\end{figure}

\subsubsection{Representatives for $E^{k}_0$}
Let $M$ be a connected closed orientable 3-manifold having a surgery description as in Figure~\ref{e^k_0_surg}. 
It has a Heegaard splitting $V\cup W$ as in Figure~\ref{e^k_0}, so-called a vertical Heegaard splitting. 
The convention of boxes containing rational numbers is as in Figure~\ref{a^p_q}. 
Note that this Heegaard diagram is not minimally intersecting. 
We regard $V$ as the inner handlebody and $W$ as the outer handlebody. 
Note that the orientation coming from the standard one of $S^3$ corresponds to that coming from $V$. 
By computation, we see that $M$ is a rational homology 3-sphere. 
Let $a^{V}_1, a^{V}_2$ be curves as in Figure~\ref{e^k_0_a}, where $b^{V}_1, b^{V}_2$ is the same as in Figure~\ref{e^k_0}, which we abbreviate. 
Then the matrices in \ref{cal_tor} are 
   $A=\left( \begin{array}{cc}
   0       & 1  \\
   1  &    1   \\
  \end{array} \right)$
 and 
   $B=\left( \begin{array}{cc}
   2^{k}       & 0  \\
   -2^{k}  &    2^{k}   \\
  \end{array} \right)$. 
Thus by Fact \ref{cfh}, $H_1(M)=\mathbb{Z}\langle x, y \rangle/(2^{k}x-2^{k}y, 2^{k}y)$ and $\psi_{M}=-B^{-1}A=\left( \begin{array}{cc}
   0       & -2^{-k}  \\
   -2^{-k}  &    2^{1-k}   \\
  \end{array} \right)$ under the basis $\{x,y\}$. 
By changing a basis, $H_{1}(M)=\left( \mathbb{Z}\langle x+y\rangle/(2^{k}(x+y)) \right) \oplus \left( \mathbb{Z} \langle -x\rangle /(2^{k}(-x)\right)$
 and $\psi_{M}=\left( \begin{array}{cc}
   0       & 2^{-k}\\
   2^{-k}  &    0   \\
  \end{array} \right)$
 under the basis $\{x+y,-x\}$, the same as $E^{k}_0$. 
Henceforth, $M(E^{k}_0)$ denotes this $M$. 

\begin{figure}[htbp]
 \begin{center}
  \includegraphics[width=130mm]{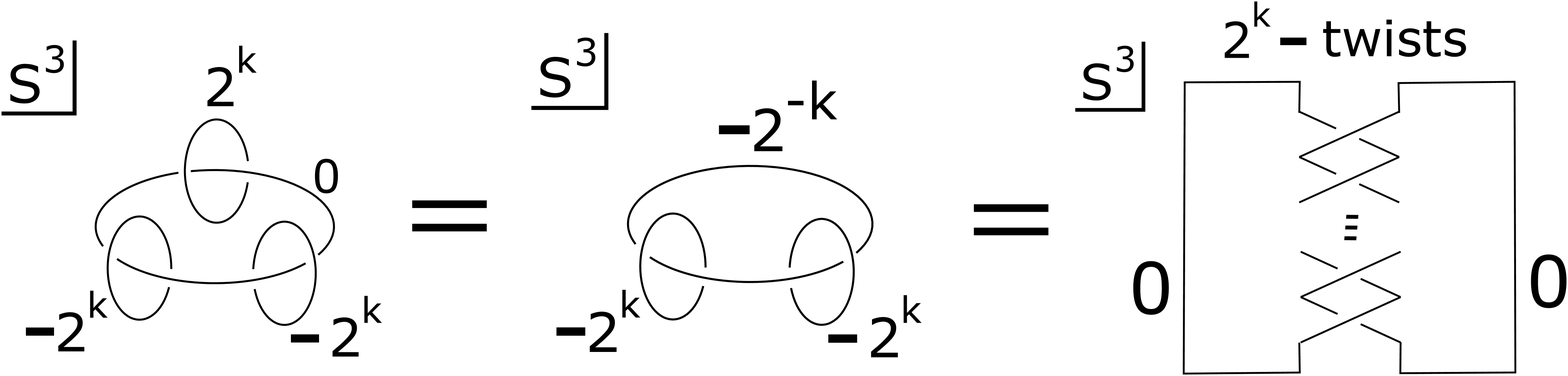}
 \end{center}
 \caption{Surgery descriptions of $M$}
 \label{e^k_0_surg}
\end{figure}

\begin{figure}[htbp]
 \begin{center}
  \includegraphics[width=80mm]{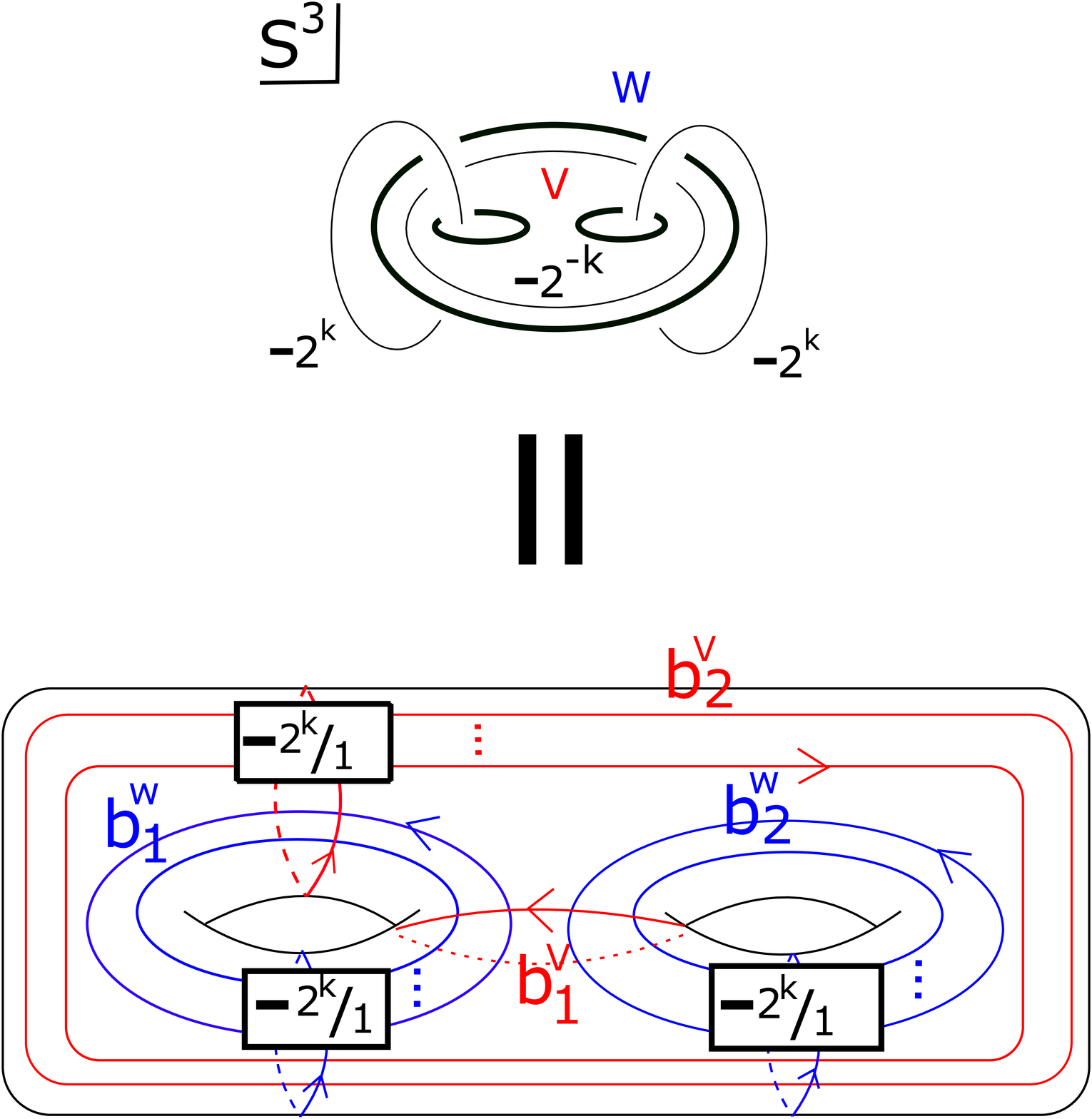}
 \end{center}
 \caption{A Heegaard splitting of $M$}
 \label{e^k_0}
\end{figure}

\begin{figure}[htbp]
 \begin{center}
  \includegraphics[width=80mm]{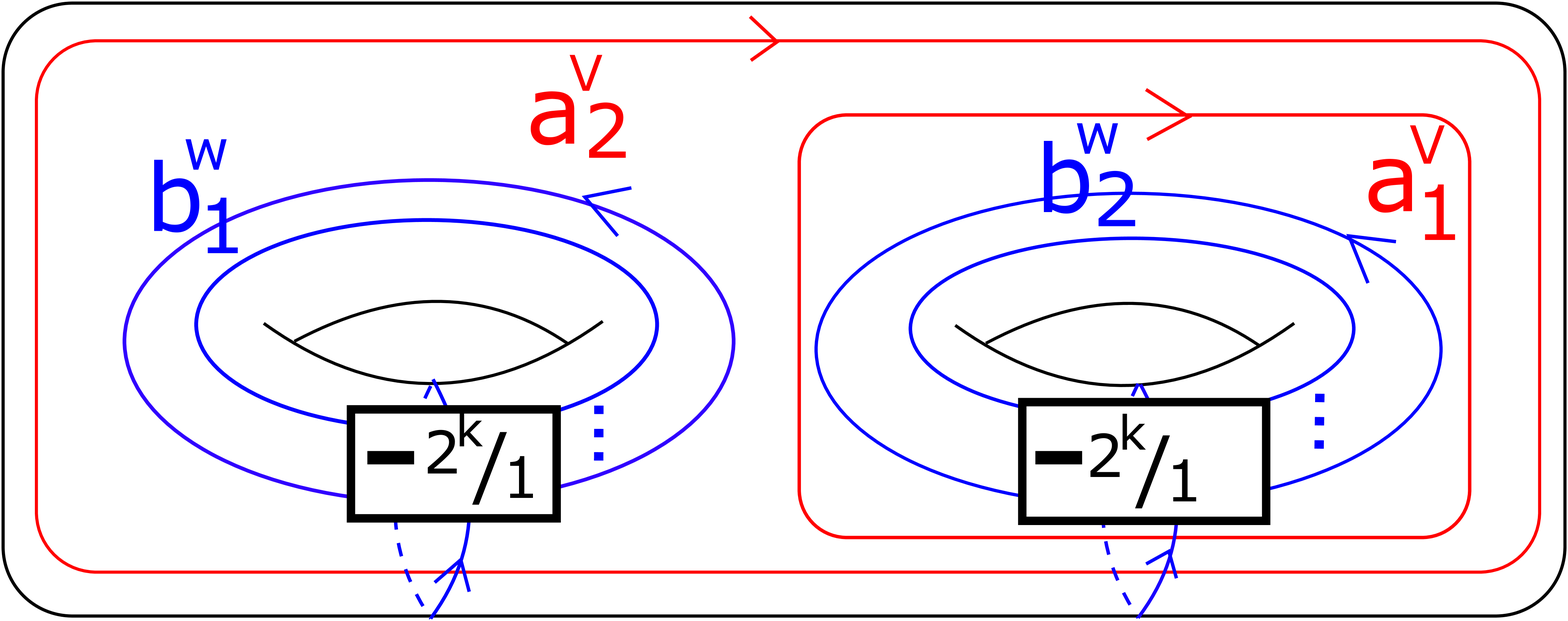}
 \end{center}
 \caption{Curves $a^{V}_1, a^{V}_2, b^{W}_{1}, b^{W}_{2}$}
 \label{e^k_0_a}
\end{figure}

\subsubsection{Representatives for $E^{k}_{1}$}
Let $M$ be a connected closed orientable 3-manifold having a surgery description as in Figure~\ref{e^k_1_surg}. 
It has a Heegaard splitting $V\cup W$ as in Figure~\ref{e^k_1}, so-called a vertical Heegaard splitting. 
The convention of boxes containing rational numbers is as in Figure~\ref{a^p_q}. 
We regard $V$ as the inner handlebody and $W$ as the outer handlebody. 
Note that the orientation coming from the standard one of $S^3$ corresponds to that coming from $V$. 
By computation, we see that $M$ is a rational homology 3-sphere. 
%Set $N$ be $\frac{2^{k}-(-1)^{k}}{3}$. 
Let $a^{V}_1, a^{V}_2$ be curves as in Figure~\ref{e^k_1_a}, where $b^{V}_1, b^{V}_2$ are the same as in Figure~\ref{e^k_1}, which we abbreviate. 
%and the orientation of $a_2$ depends on the parity of $k$. 
Then the matrices in \ref{cal_tor} are 
   $A=\left( \begin{array}{cc}
   0       & -1  \\
   -3  &    3   \\
  \end{array} \right)$
 and 
   $B=\left( \begin{array}{cc}
   2^{k}       & 2\cdot2^{k}  \\
   -2^{k}  &   -3\cdot 2^{k}   \\
  \end{array} \right)$. 
%   $A=\left( \begin{array}{cc}
  % 1       & (-1)^{k-1}N  \\
   %-1  &    (-1)^{k}2^{k}   \\
  %\end{array} \right)$
% and 
  % $B=\left( \begin{array}{cc}
   %2^{k}       & -2^{k}  \\
   %-2\cdot2^{k}  &   3\cdot 2^{k}   \\
  %\end{array} \right)$. 
Thus by Fact~\ref{cfh}, 
$H_1(M)=\mathbb{Z}\langle x, y\rangle/(2^{k}x-2^{k}y, 2\cdot2^{k}x-3\cdot2^{k}y)$ and $\psi_{M}=-B^{-1}A=\left( \begin{array}{cc}
   3\cdot 2^{1-k}       & -3\cdot 2^{-k}  \\
   -3\cdot 2^{-k}  &    2^{1-k}   \\
  \end{array} \right)$ under the basis $\{x,y\}$. 
By changing a basis, $H_{1}(M)=\left( \mathbb{Z}\langle x+y\rangle/(2^{k}(x+y)) \right) \oplus \left( \mathbb{Z} \langle -y\rangle /(2^{k}(-y)\right)$
 and $\psi_{M}=\left( \begin{array}{cc}
   2^{1-k}       & 2^{-k}\\
   2^{-k}  &    2^{1-k}   \\
  \end{array} \right)$
 under the basis $\{x+y,-y\}$, the same as $E^{k}_1$. 
Henceforth, $M(E^{k}_1)$ denotes this $M$. 
%Thus by Fact \ref{cfh}, $H_1(M)=\langle \mathbb{Z}\langle x\rangle \oplus \mathbb{Z}\langle y\rangle \rangle/(2^{k}x-2\cdot 2^{k}y, -2^{k}x+3\cdot2^{k}y) = \left( \mathbb{Z}\langle x\rangle/(2^{k}x) \right) \oplus \left( \mathbb{Z}\langle y\rangle /(2^{k}y)\right)$ and the torsion linking form is
 %$\left( \begin{array}{cc}
   %2^{1-k}       & 2^{-k}\\
   %2^{-k}  &    (-1)^{k-1}\cdot 2N +(-1)^{k} \cdot 2^{k}   \\
  %\end{array} \right)
%$. 
%This implies $\phi_{M}(x,x)=2^{1-k}$, $\phi_{M}(x,y)=2^{-k}$ and $\phi_{M}(y,y)=(-1)^{k-1}\cdot 2N +(-1)^{k} \cdot 2^{k}$. 
%Consider another basis $\langle x, -x+3y \rangle$ of $H_{1}(M)$. 
%Since $y=(-1)^{k-1}\cdot \frac{2^{k}-(-1)^{k-1}}{3}\cdot \left( x+(-x+3y)\right)$ in $H_{1}(M)$, it is actually a basis. 
%Under this basis, $\phi_{M}(x,x)=2^{1-k}$, $\phi_{M}(x,-x+3y)=-2^{1-k}+3\cdot 2^{-k}=2^{-k}$ and $\phi_{M}(-x+3y,-x+3y)=2^{1-k}-6\cdot 2^{-k}+9\cdot (-1)^{k-1}\cdot \frac{2^{k}-(-1)^{k-1}}{3}\cdot 3\cdot 2^{1-k}=2^{1-k}$ in $\mathbb{Q}/\mathbb{Z}$, same as $E^{k}_{1}$. 
%Thus $\phi_{M}$ is equivalent to $E^{k}_1$. 
%Henceforth, $M(E^{k}_1)$ denotes this $M$. 

\begin{figure}[htbp]
 \begin{center}
  \includegraphics[width=130mm]{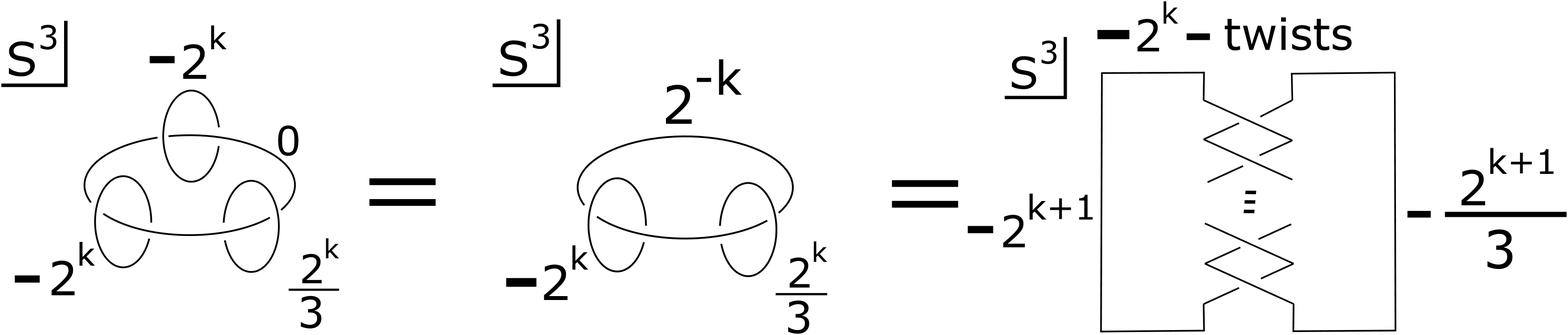}
 \end{center}
 \caption{Surgery descriptions of $M$}
 \label{e^k_1_surg}
\end{figure}

\begin{figure}[htbp]
 \begin{center}
  \includegraphics[width=80mm]{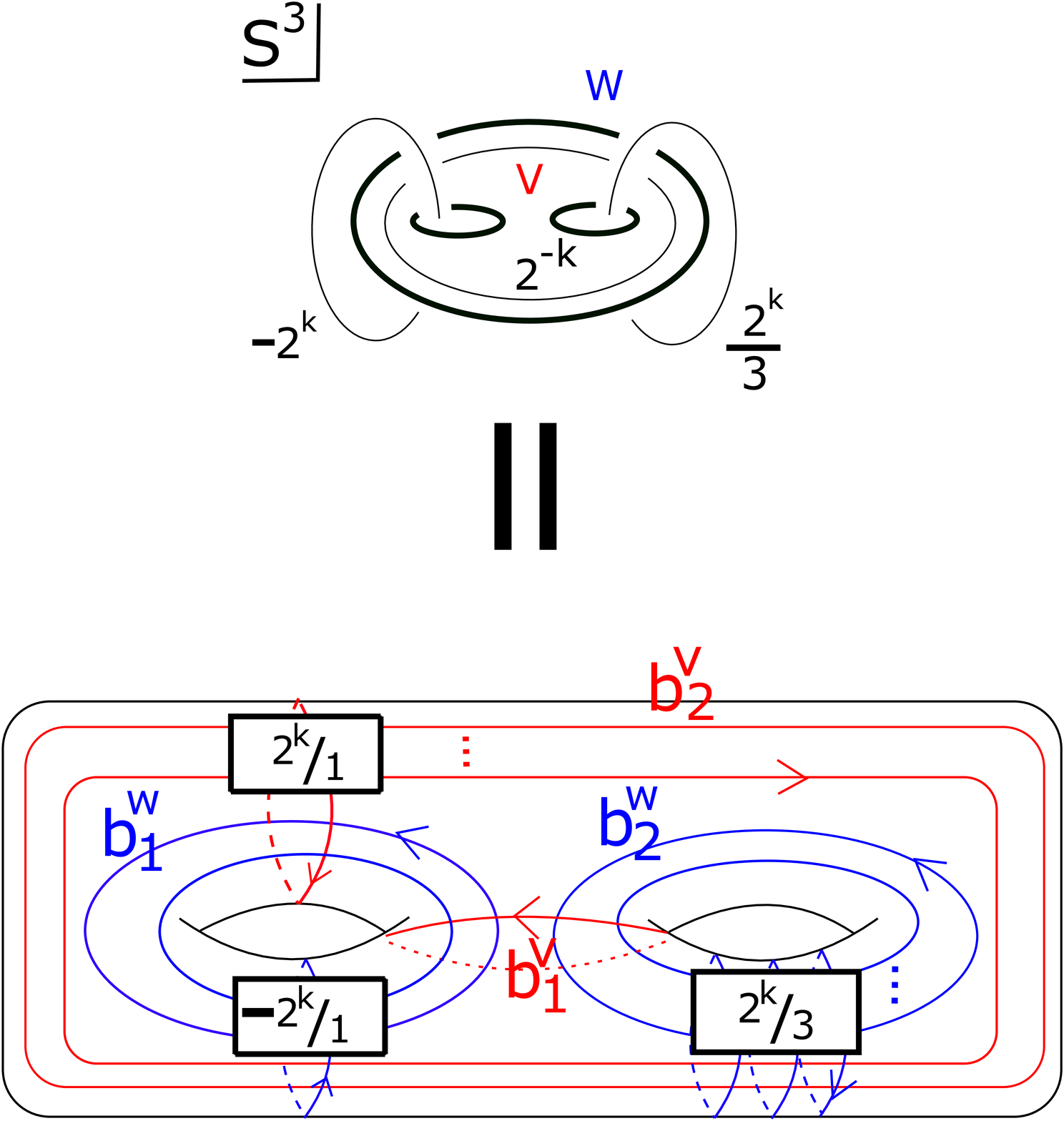}
 \end{center}
 \caption{A Heegaard splitting of $M$}
 \label{e^k_1}
\end{figure}

\begin{figure}[htbp]
 \begin{center}
  \includegraphics[width=100mm]{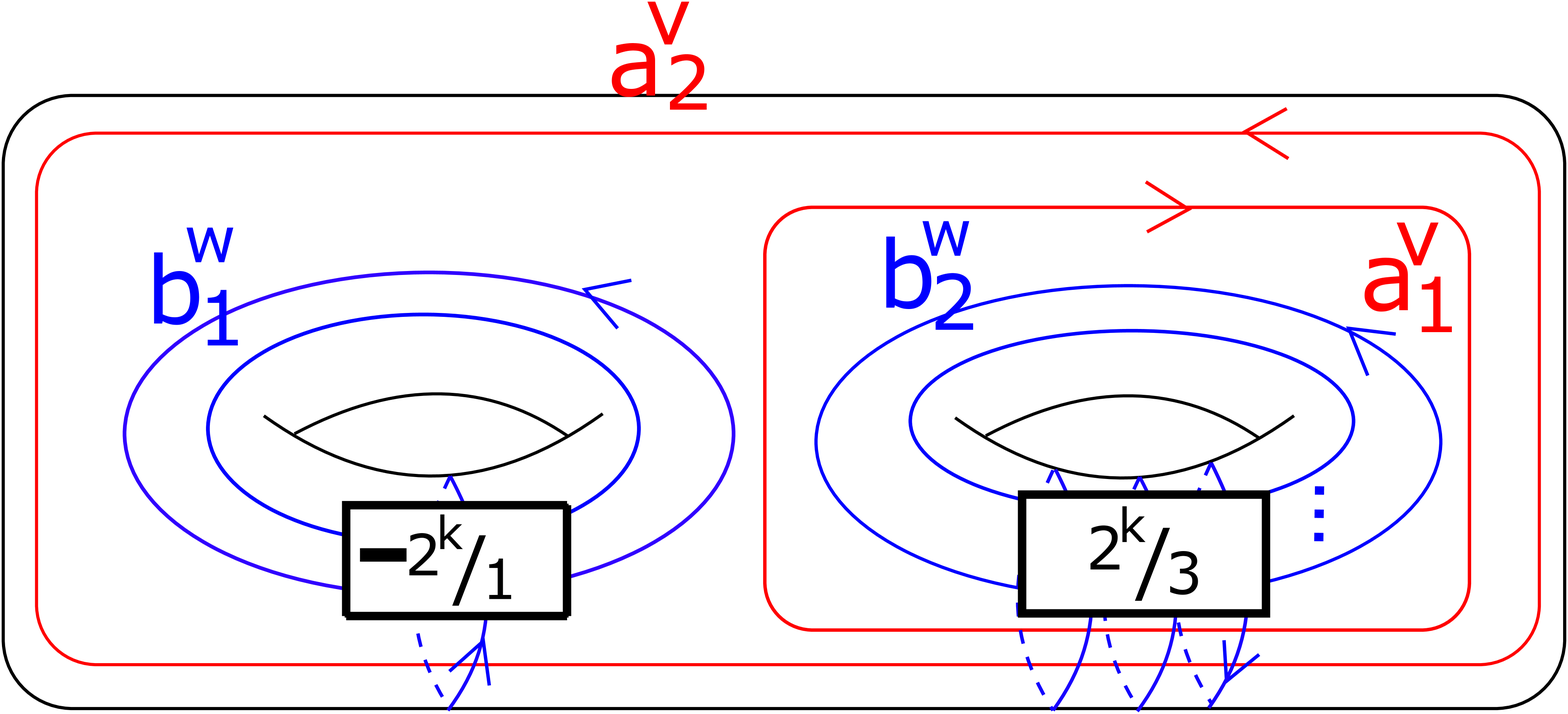}
 \end{center}
 \caption{Curves $a^{V}_1, a^{V}_2, b^{W}_{1}, b^{W}_{2}$}
 \label{e^k_1_a}
\end{figure}

\vspace{1.0cm}

From the above, we have representatives for connected closed oriented 3-manifolds with respect to the first homology groups and their torsion linking forms. 

\begin{prop} \label{representative}
Suppose that a connected closed oriented 3-manifold $M$ whose free part of the first homology group is isomorphic to $\mathbb{Z}^{r}$ and whose torsion linking form is equivalent to $(\bigoplus_{0\leq l\leq a}A^{p_l}(q_l))\oplus (\bigoplus_{0\leq i \leq e_0} E^{k_i}_{0}) \oplus (\bigoplus_{0\leq j\leq e_1}E^{k'_{j}}_1)$ for integers $a, p_{l}, q_{l}, e_{0}, k_{i}, e_{1}, k'_{j}$ as in the end of Subsection \ref{linkingpair}. \\
Then a 3-manifold $\left(\#^{r}S^2\times S^1\right)\#\left(\#_{0\leq l\leq a}M\left(A^{p_l}(q_l)\right)\right)\# (\#_{0\leq i \leq e_0} M(E^{k_i}_{0})) \# (\#_{0\leq j\leq e_1}M(E^{k'_{j}}_1))$ has isomorphic first homology  group to that of $M$ and has equivalent torsion linking forms to that of $M$, where as a notation we regard $\#^{0}S^2\times S^1$, $M(A^{1}(q_0))$, $M(E^{0}_{0})$ and $M(E^{0}_{1})$ as $S^3$.
\end{prop}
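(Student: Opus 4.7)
The plan is to assemble the proposition from three ingredients: (i) the behavior of $H_1$ and of the torsion linking form under connected sum, (ii) the first homology of $S^2\times S^1$, and (iii) the computations performed in the three preceding subsubsections, which exhibit $M(A^{p_l}(q_l))$, $M(E^{k_i}_0)$, and $M(E^{k'_j}_1)$ as rational homology $3$-spheres realizing the generators of $\mathcal{R}$.

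For ingredient (i), I would recall that for connected closed oriented $3$-manifolds $X$ and $Y$, a Mayer--Vietoris argument applied to the splitting $X\#Y = (X\setminus B^3)\cup_{S^2}(Y\setminus B^3)$, using $H_1(S^2)=H_2(S^2)=0$ (for $H_2$) and $H_1(S^2)=0$, gives $H_1(X\#Y)\cong H_1(X)\oplus H_1(Y)$, with the obvious splitting. Under this isomorphism the torsion linking form satisfies $\psi_{X\#Y}=\psi_X\oplus\psi_Y$, as noted already in Subsection~\ref{linkingpair}; the reason is that any torsion class in one summand has a bounding chain (of its multiple) disjoint from the separating $S^2$, so algebraic intersection numbers of the form $\langle S_a,L_b\rangle$ with $a,b$ in different summands vanish, while those with $a,b$ in the same summand are unchanged. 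Iterating this additivity over the factors of the proposed connected sum reduces the claim to a statement about each factor individually.

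For ingredient (ii), $S^2\times S^1$ satisfies $H_1(S^2\times S^1)\cong\mathbb{Z}$ with $TH_1=0$, so its torsion linking form is trivial; the $r$-fold connected sum $\#^r S^2\times S^1$ therefore contributes exactly $\mathbb{Z}^r$ to the free part of $H_1$ and the identity element $\phi$ to $\mathcal{R}$. For ingredient (iii), the three preceding subsubsections were organized precisely to prove that the named representatives have torsion linking forms equal to $A^{p_l}(q_l)$, $E^{k_i}_0$, $E^{k'_j}_1$ respectively (and each is a rational homology sphere, hence contributes nothing to the free part). Combining (i), (ii), and (iii), the free part of $H_1$ of the connected sum in the statement is $\mathbb{Z}^r$ and its torsion linking form is $(\bigoplus A^{p_l}(q_l))\oplus(\bigoplus E^{k_i}_0)\oplus(\bigoplus E^{k'_j}_1)$, matching that of $M$ by hypothesis.

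There is no serious obstacle here; the proposition is essentially a bookkeeping corollary of the three computations made in this section together with the additivity of torsion linking forms under $\#$. The only thing to be careful about is the convention that the $0$-indexed summands $A^1(q_0)$, $E^0_0$, $E^0_1$ stand for $\phi$, realized by $S^3$, so that the indexing ranges $0\leq l\leq a$, $0\leq i\leq e_0$, $0\leq j\leq e_1$ in the connected sum produce the correct representative without introducing spurious $S^3$ summands beyond what is tautologically absorbed.
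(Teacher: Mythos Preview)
Your proposal is correct and follows exactly the route the paper takes: the paper does not give a separate proof of this proposition but presents it as an immediate consequence of the preceding three computations together with the additivity of $H_1$ and of the torsion linking form under connected sum (the latter already recorded in Subsection~\ref{linkingpair}). You have simply made the implicit assembly explicit.
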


We use this in Section~\ref{proof}.

%section4
\section{Thickened surface and its spine bands} \label{surf_bands}
In this section, we explain how surfaces (with their spines) in a given 3-manifold correspond to pairs of annuli and bands in the manifold. 
All surfaces and 3-manifolds in this section are oriented, and we can use the terminology ``front'' and ``back'' sides of surfaces.\\\\
We state the statement as a proposition:
\begin{prop}\label{correspondence}
Let $M$ be an oriented 3-manifold. Fix non-negative integers $g$ and $n$.\\
There is a bijection between the set of isotopy classes of oriented surfaces in $M$, each of which is homeomorphic to $\Sigma_{g,n+1}$ with the spine as in Figure \ref{spine}
 and the set of oriented annuli $A_{1}, \dots,A_{2g+n}$ with oriented core curves and the oriented bands (regarded as small rectangles) $B_{1},\dots, B_{g}, C_{1}, \dots ,C_{g+n-1}$ satisfying the following condition (see Figure~\ref{bands} for example):
\begin{itemize}
\item One side of $\partial B_{i}$, which consists of four sides, is on the back side of $A_{2i-1}$ and is a properly embedded essential arc on $A_{2i-1}$, and the opposite side of $\partial B_{i}$ is on the front side of $A_{2i}$ and is a part of the core curve of $A_{2i}$ for $1\leq i\leq g$.

\item For $1\leq i\leq g$, near the intersection of $B_i$ with $A_{2i-1}$, the front side of $B_i$ is on the positive direction with respect to the orientation of the core curve of $A_{2i-1}$,
 and near that of with $A_{2i}$, the front side of $B_{i}$ is the ``right'' side of the core curve of $A_{2i}$, where we look the intersection (this is on the front side of $A_{2i}$) so that the core curve of $A_{2i}$ runs from the bottom to the top. 

\item For $1\leq i \leq g-1$, $C_{i}$ connects the ``left'' boundary of $A_{2i}$ and the ``right'' boundary of $A_{2i+1}$ so that the front sides are attached,
 and for $g\leq j\leq g+n-1$, $C_{j}$ connects the ``left'' boundary of $A_{j+g}$ and the ``left'' boundary of $A_{j+g+1}$ so that the front sides are attached, where we look the front side of $A_{k}$ so that the core curve runs from the bottom to the top.

\end{itemize}
\end{prop}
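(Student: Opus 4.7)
The plan is to produce explicit, mutually inverse constructions between the two sides of the claimed bijection and then to check that both descend to isotopy classes.

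First I would handle the map from surfaces to annulus-band data. Given an oriented surface $F\subset M$ with $F\cong\Sigma_{g,n+1}$ equipped with a spine $T\subset F$ as in Figure~\ref{spine}, the spine $T$ is a wedge of $2g+n$ oriented loops $\ell_1,\dots,\ell_{2g+n}$ meeting at a single vertex $v$. Since $F$ deformation retracts onto $T$, a small closed regular neighborhood of $T$ in $F$ is isotopic to $F$ itself. For each $k$, I take the annular piece of this neighborhood around $\ell_k$ (away from $v$) to be $A_k$, oriented from $F$ and with oriented core $\ell_k$. The remainder, a small polygonal neighborhood of $v$, canonically decomposes into $g+(g+n-1)=2g+n-1$ rectangles: the $B_i$, one per pair $\ell_{2i-1},\ell_{2i}$ that records the $i$-th handle, and the $C_j$, which chain consecutive annuli through the wedge. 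Inspection of the standard local picture (Figure~\ref{bands}) shows that the three bulleted compatibility conditions are satisfied.

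Next I would give the inverse. Given annuli $A_k$ and bands $B_i,C_j$ in $M$ satisfying the stated conditions, paste them together along the prescribed attaching arcs to form an embedded compact surface $F\subset M$ with a spine $T$ consisting of the annular cores joined through the midpoints of the bands. The matching of front sides guarantees a coherent transverse orientation, so $F$ is orientable. An Euler-characteristic count gives
\begin{equation*}
\chi(F)=0\cdot(2g+n)+1\cdot(2g+n-1)-2(2g+n-1)=1-2g-n=\chi(\Sigma_{g,n+1}).
\end{equation*}
The attaching condition for each $B_i$ (an essential arc on $A_{2i-1}$ opposite a sub-arc of the core of $A_{2i}$) is the standard local model for a genus-one handle, producing $g$ handles in total. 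Tracing boundary circles through the $C_j$ attachments confirms that $F$ has exactly $n+1$ boundary components, so $F\cong\Sigma_{g,n+1}$. That the two constructions descend to isotopy classes is built in: the annuli and bands are uniquely determined up to isotopy in $M$ once the pair $(F,T)$ is fixed, and an ambient isotopy of $F$ extends to one of its small regular neighborhoods; conversely, an ambient isotopy of the annuli and bands yields an ambient isotopy of the glued surface.

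The main obstacle lies in the orientation bookkeeping. The front/back distinction on the annuli, the left/right flanks of the core curves, and the precise crossing direction of each $B_i$ have to be arranged so that $F$ is actually orientable, so that each $B_i$ gives a genuine handle (rather than a M\"obius band or an extra boundary component) and provides a symplectic generator pair in $H_1(F)$, and so that the $C_j$ chain yields exactly $n+1$ boundary circles rather than a single long one or more than $n+1$. Once the local conventions set up in Figures~\ref{spine} and~\ref{bands} are read off carefully, the rest of the argument reduces to a direct local check.
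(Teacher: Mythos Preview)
Your proposal rests on a misreading of condition~$\spadesuit$, and this is a genuine gap rather than a matter of bookkeeping. You treat the annuli and bands as tiles of a two-dimensional decomposition of $F$ that one simply glues up. But read the first bullet again: one edge of $B_i$ is a properly embedded essential arc on the \emph{back} side of $A_{2i-1}$, and the opposite edge lies on the \emph{front} of $A_{2i}$ as a sub-arc of its core curve. The core is in the \emph{interior} of $A_{2i}$, so gluing $B_i$ there produces a $T$-junction, not a surface; and the band meets $A_{2i-1}$ from behind, transversally to it. The union $\bigl(\bigcup A_k\bigr)\cup\bigl(\bigcup B_i\bigr)\cup\bigl(\bigcup C_j\bigr)$ is a 2-complex sitting genuinely in three dimensions, not an embedded surface. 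Consequently your inverse map (``paste them together to form an embedded compact surface'') does not make sense as stated, and the Euler-characteristic count is being applied to the wrong object.

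The paper's construction makes the three-dimensionality explicit. In the forward direction one works in the thickening $F\times[-1,1]\subset M$: the annulus $A_{2i-1}$ is pushed to the top level $\{1\}$, $A_{2i}$ to the bottom level $\{-1\}$, and $B_i$ is the vertical strip $\phi_i(\{0\}\times[-1,1])\times[-1,1]$ joining them; the $C_j$ are either horizontal or slanted across the levels. This is why $B_i$ hits the back of $A_{2i-1}$ and the front of $A_{2i}$. For the inverse, one takes a regular neighborhood $N$ of the whole 2-complex in $M$; $N$ is a handlebody, and the surface $F$ is recovered as a neighborhood of certain curves on $\partial N$, which exhibits $N\cong\Sigma_{g,n+1}\times[-1,1]$ with $F$ a level surface. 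Your forward map (decomposing a regular neighborhood of the spine \emph{inside} $F$) never leaves $F$, so it cannot produce the front/back separation that $\spadesuit$ demands; you would need to pass to $F\times[-1,1]$ as the paper does.
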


We call the condition for annuli and bands in Proposition~\ref{correspondence} the condition $\spadesuit$. 
We construct oriented annuli and bands from a surface with its spine in Subsection~\ref{fromsurf}, and construct a surface with its spine from oriented annuli and bands in Subsection~\ref{fromknots}. 
We can see that these operations are the inverses of each other. 
Thus Proposition~\ref{correspondence} holds.

\begin{figure}[htbp]
 \begin{center}
  \includegraphics[width=150mm]{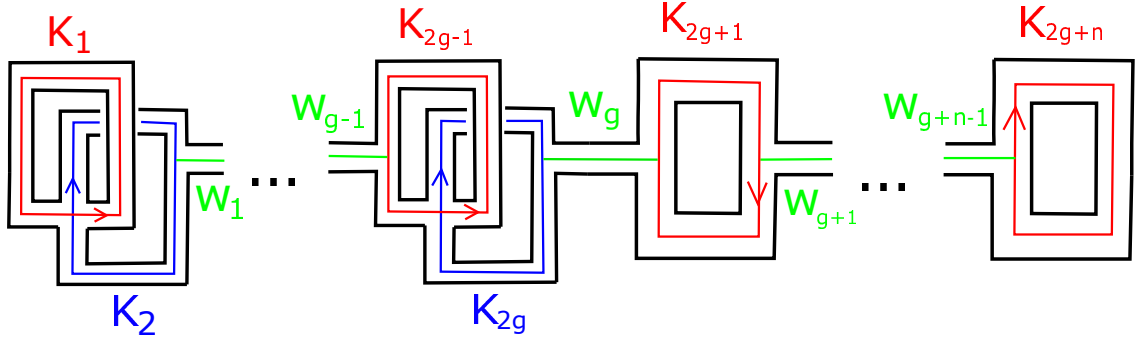}
 \end{center}
 \caption{A spine of $\Sigma_{g,n+1}$}
 \label{spine}
\end{figure}

\begin{figure}[htbp]
 \begin{center}
  \includegraphics[width=150mm]{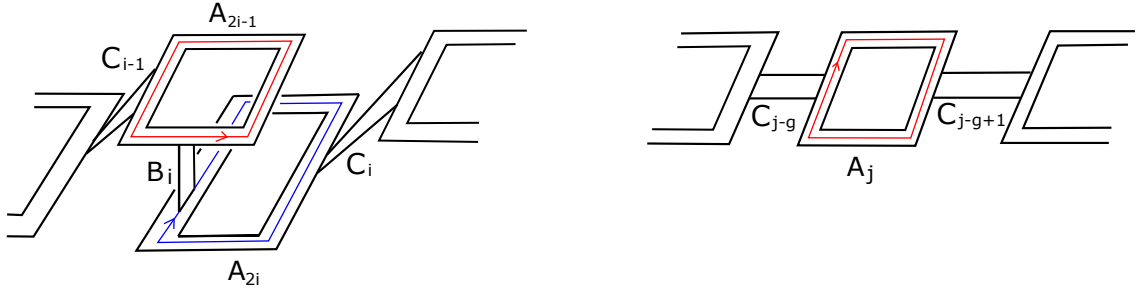}
 \end{center}
 \caption{Annuli and bands}
 \label{bands}
\end{figure}

\subsection{Annuli and bands obtained from surfaces}\label{fromsurf}
Let $F$ be a surface homeomrphic to $\Sigma_{g,n+1}$ in $M$. 
Fix an embedding $\iota : \Sigma_{g,n+1}\times[-1,1] \longrightarrow M$ such that $\iota(\Sigma_{g,n+1}\times \{0\})=F$ and identify $\Sigma_{g,n+1}$ with $F$ via $\iota$. 
Take a spine $(K_{1}\cup K_{2})\cup w_{1}\cup \cdots \cup w_{g-1}\cup (K_{2g-1}\cup K_{2g})\cup w_{g}\cup K_{2g+1} \cup w_{g+1} \cup \cdots \cup w_{g+n-1} \cup K_{2g+n}$ of $F$ as in Figure~\ref{spine}. 
We assign an orientation to $K_i$ for each $1\leq i\leq 2g+n$. 
Denote the point $K_{2i-1}\cap K_{2i}$ for $1\leq i\leq g$ by $p_i$. 
Let $U_i$ be the square centered at $p_i$ and $V_j$ the rectangle around $w_j$ as in Figure~\ref{charts}. 
Take charts $\phi_{i} : [-1,1]^{2} \longrightarrow U_i$ such that $\phi([-1,1]\times \{0\})$ is a part of $K_{2i-1}$, $\phi(\{0\}\times [-1,1])$ is a part of $K_{2i}$, and $\psi_{j} : [-1,1]^{2} \longrightarrow V_{j}$ such that $\psi([-1,1]\times \{0\})$ is a part of $w_j$ and $\{-1\}\times [-1,1]$ is near $K_{2j}$ for $1\leq j\leq g$, near $K_{j+g}$ for $g+1\leq j\leq g+n-1$.

\begin{figure}[htbp]
 \begin{center}
  \includegraphics[width=70mm]{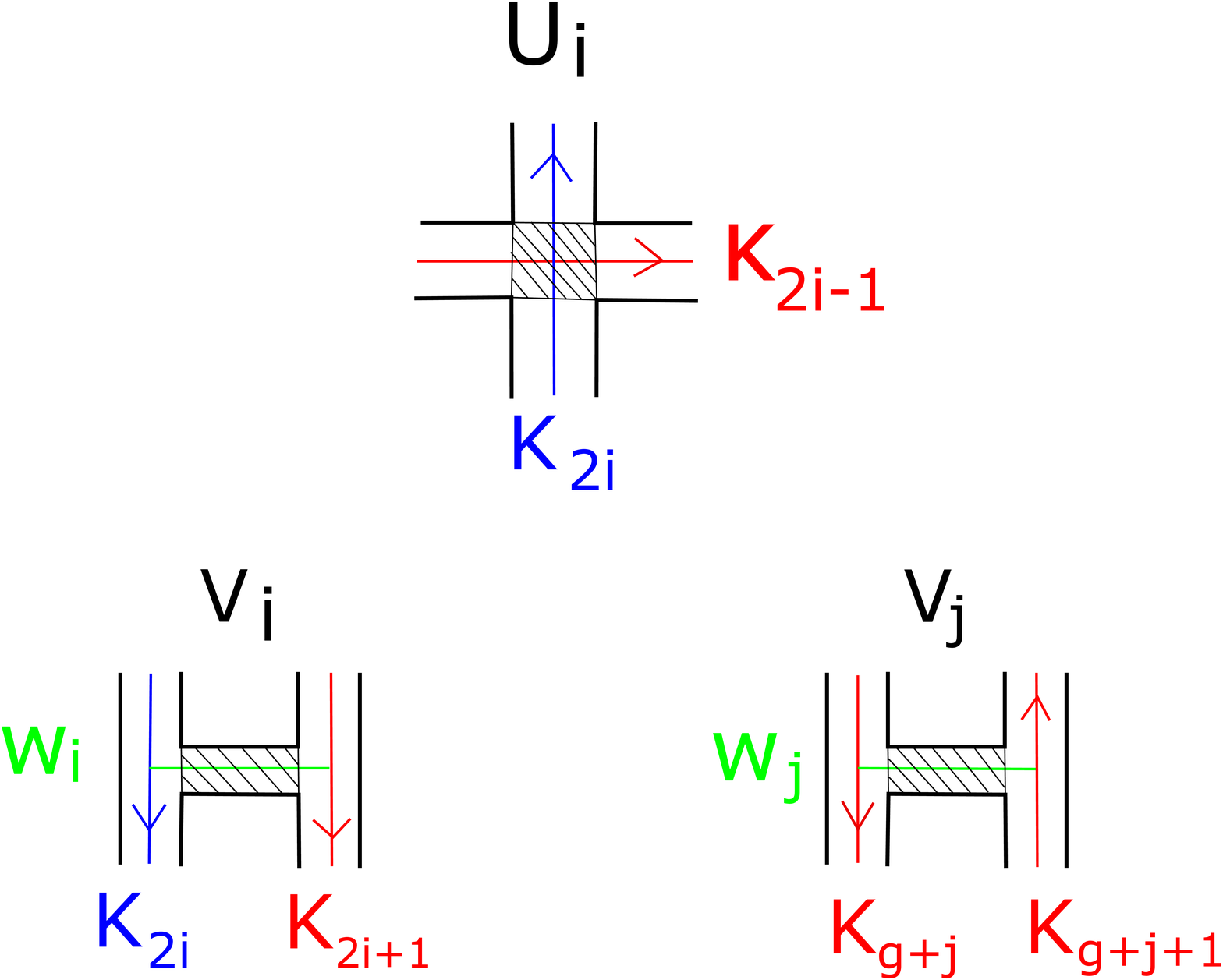}
 \end{center}
 \caption{Charts}
 \label{charts}
\end{figure}

Denote the annulus whose core curve is $K_i$ by $A'_i$ for $1\leq i\leq 2g+n$ as in Figure~\ref{annuli}. 
Consider $F\times [-1,1]$, see Figure \ref{thicken1}. We regard that it is in $M$ via $\iota$. 
Let $A_{2i-1}$, $A_{2i}$ and $A_{j}$ be $A'_{2i-1}\times \{1\}$, $A'_{2i}\times \{-1\}$ and $A'_{j}\times \{1\}$, respectively for $1\leq i\leq g$ and $2g+1\leq j\leq 2g+n$. 
These annuli can be regarded as framed knots in $M$. 
Let $B_{i}$, $C_{i}$ and $C_{j}$ be $\phi_{i} \left( \{0\}\times[-1,1] \right) \times[-1,1]$, $\left\{\psi_{i}(\{t\}\times [-1,1])\times \{t\}| t\in [-1,1] \right\}$ and $\psi_{j}([-1,1]^{2})\times \{1\}$, respectively for $1\leq i\leq g$ and $g+1\leq j\leq g+n-1$. 
These are bands in $M$. 
We get an oriented annuli $A_{k}$'s and bands $B_{i}$'s and $C_{j}$'s (these are rectangles) in $M$ as in Figure~\ref{bands}. 
Observe that these satisfy the condition $\spadesuit$ with appropriate orientations.

\begin{figure}[htbp]
 \begin{center}
  \includegraphics[width=80mm]{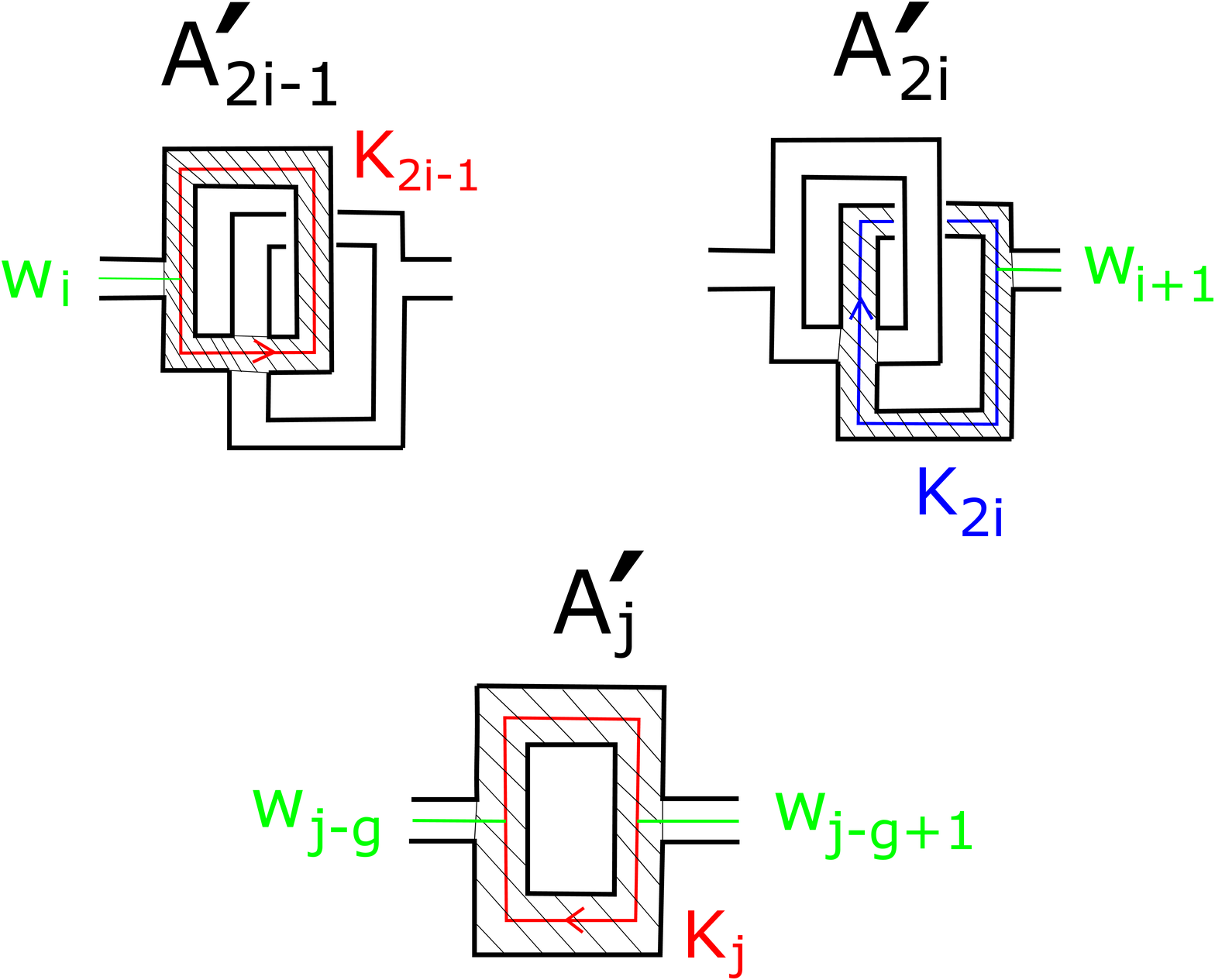}
 \end{center}
 \caption{Annuli}
 \label{annuli}
\end{figure}

\begin{figure}[htbp]
 \begin{center}
  \includegraphics[width=150mm]{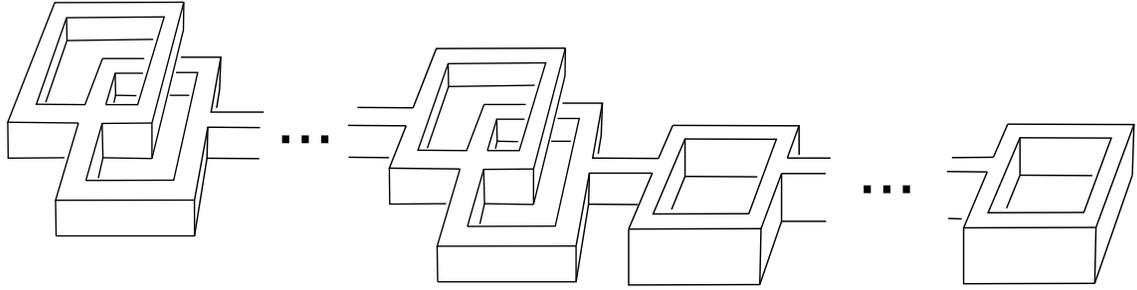}
 \end{center}
 \caption{Thickened $F$}
 \label{thicken1}
\end{figure}

\subsection{Surfaces from annuli and bands} \label{fromknots}
Let $A_{1}, \dots, A_{2g+n}$ be oriented annuli and $B_{1},\dots, B_{g}, C_{1}, \dots ,C_{g+n-1}$ be bands satisfying the condition $\spadesuit$ in $M$. 
These look like some embeddings of the union of annuli and bands as in Figure \ref{bands}. 
%We realize the union concretely in $\mathbb{R}^3$ as follows:
%
%\begin{itemize}
%\item For $1\leq i \leq g$, $A_{2i-1}=\{ (8(i-1)+x, y, 2)\in \mathbb{R}^{3} | 0\leq x\leq 5, 2\leq y\leq 3\} \cup \{(8(i-1)+x, y, 2)\in \mathbb{R}^{3} | 0\leq x\leq 5, 8\leq y\leq 9\} \cup \{(8(i-1)+x, y, 2)\in \mathbb{R}^{3} | 0\leq x\leq 1, 2\leq y\leq 9\} \cup \{(8(i-1)+x, y, 2)\in \mathbb{R}^{3} | 4\leq x\leq 5, 2\leq y\leq 9\}$. 
%\item  For $1\leq i \leq g$, $A_{2i}=\{ (8(i-1)+x, y, 0)\in \mathbb{R}^{3} | 2\leq x\leq 7, 0\leq y\leq 1\} \cup \{(8(i-1)+x, y, 0)\in \mathbb{R}^{3} | 2\leq x\leq 7, 6\leq y\leq 7\} \cup \{(8(i-1)+x, y, 0)\in \mathbb{R}^{3} | 2\leq x\leq 3, 0\leq y\leq 7\} \cup \{(8(i-1)+x, y, 0)\in \mathbb{R}^{3} | 6\leq x\leq 7, 0\leq y\leq 7\}$. 
%\item For $1\leq i\leq n$, $A_{2g+i}=\{ (8(g+i-1)+x, y, 2)\in \mathbb{R}^{3} | 0\leq x\leq 7, 2\leq y\leq 3\} \cup \{(8(g+i-1)+x, y, 2)\in \mathbb{R}^{3} | 0\leq x\leq 7, 6\leq y\leq 7\} \cup \{(8(g+i-1)+x, y, 2)\in \mathbb{R}^{3} | 0\leq x\leq 1, 2\leq y\leq 7\} \cup \{(8(g+i-1)+x, y, 2)\in \mathbb{R}^{3} | 6\leq x\leq 7, 2\leq y\leq 7\}$. 
%\item For $1\leq i\leq g$, $B_{i}=\{ (8(i-1)+\frac{5}{2},y,z) \in \mathbb{R}^{3}| 2\leq y\leq 3, 0\leq z\leq 2\}$.
%\item For $1\leq i\leq g$, $C_{i}=\{ (7i+x,y,2x) \in \mathbb{R}^{3}| 0\leq x\leq 1, 4\leq y\leq 5\}$.
%\item For $g+1\leq i\leq g+n-1$, $C_{i}=\{ (7i+x,y,2) \in \mathbb{R}^{3}| 0\leq x\leq 1, 4\leq y\leq 5\}$.
%\end{itemize}
%
Thicken these (denoted by $N$) and consider curves with colors as in Figure \ref{thicken2}. In this figure, a part of the core curve of $A_{2i}$, which is also one edge of the band $B_{i}$ is replaced with the other three edges of $B_{i}$ for each $1\leq i\leq g$. 
A small neighborhood $F$ of these curves in $\partial N$ is homeomorphic to $\Sigma_{g,n+1}$ with a spine as in Figure~\ref{spine}. 
Let $K_{2i-1}$ denote the red curve coming from $A_{2i-1}$ and $K_{2i}$ the blue curve coming from $A_{2i}$ and $B_{i}$ for $1\leq i\leq g$, and $w_{j}$ the green curve coming from $C_{j}$ for $1\leq j\leq g+n-1$. 
%Then each of (the embeddings of) $\{(8(i-1)+2, y, z)\in \mathbb{R}^{3}| 8\leq y\leq 9, 1\leq z\leq 2\}$ and $\{(8(i-1)+4, y, z)\in \mathbb{R}^{3}| 0\leq y\leq 1, -1\leq z\leq 0\}$ for $1\leq i\leq g$ and $\{(8(j-1)+2, y, z\in \mathbb{R}^{3}| 0\leq y\leq 3, 1\leq z\leq 2\}$ for $2g+1\leq j\leq 2g+n$ is a product disk for $F$ i.e. a disk whose boundary intersects $\partial F$ twice with opposite orientations. Moreover, these disks cut $N$ into a ball. 
For each $1\leq i \leq 2g+n$, fix an essential arc $\alpha _{i}$ in $A_{i}$ such that it is disjoint from $w_{j}$ for $1\leq j\leq g+n-1$ and $K_{l}$ for $l\neq i$, and let $D_{i}$ be an essential disk in $N$ obtained by thickening $\alpha_{i}$. 
Then each $\partial D_{i}$ intersects $\partial F$ twice with the opposite orientations and $D_{i}$'s cut $N$ into a ball. 
Therefore, $F$ gives $N$ the structure of $\Sigma_{g,n+1}\times[-1,1]$ so that $F$ is $\Sigma_{g,n+1}\times \{1\}$. 
The colored curves play a role of a spine of $\Sigma_{g,n+1}$. 
We get a surface corresponding to $\Sigma_{g,n+1}\times \{0\}$ with a spine obtained by projecting the curves along the product structure. 
In other word, the surface $\Sigma_{g,n+1}\times \{0\}$ with its spine is isotopic to $F$ with its spine in $M$. 
We can easily show that the operations in Subsections~\ref{fromsurf} and \ref{fromknots} are inverses of each other.

\begin{rmk}
Let $m_{A_{2i-1}}$ be a meridian curve of $A_{2i-1}$ (i.e. an oriented circle on the boundary of a small neighborhood of $A_{2i-1}$ which bounds a disk in the neighborhood intersecting the core of $A_{2i-1}$ once positively) for $1\leq i\leq g$. 
%Take a meridian disk $D_{2i-1}$ bounded by $A_{2i-1}$. 
We assume $m_{A_{2i-1}}$ is in a small neighborhood of $A_{2i-1}$ and disjoint from $A_{2i-1}$. 
%This $D_{2i-1}$ intersects $A_{2i-1}$, regarded as annulus, as properly embedded essential arc $\alpha$. 
%Choose meridian disks so that the endpoints of $\alpha$ are disjoint from that of an arc $c$, denoting $A_{2i-1}\cap B_{i}$, and $\alpha$ and $c$ intersect minimally in their interiors. 
%There exists a unique integer $k_{2i-1}$ such that the composition of the right-handed Dehn twists along the core curve of $A_{2i-1}$ (looked from the above) with $k_{2i-1}$ times maps $\alpha$ to an arc which is disjoint from $c$. 
Note that the curve obtained from $A_{2i}$ by the above procedure represents $[A_{2i}]-[m_{A_{2i-1}}]$
%$[A_{2i}]+[B_{i}]+2k_{2i-1}[A_{2i-1}]-[m_{A_{2i}}]-2k_{2i-1}[A_{2i-1}]-[B_{i}]=[A_{2i}]-[m_{A_{2i-1}}]$
 in $H_{1}(M\setminus N)$, where $[A_{2i}]$ and $[A_{2i-1}]$ denote the elements represented by push-ups of the core curves of $A_{2i}$ and $A_{2i-1}$, respectively. 
%Though we do not assign the orientation to the core curve of $B_{i}$, it does not matter. 
\end{rmk}

\begin{figure}[htbp]
 \begin{center}
  \includegraphics[width=150mm]{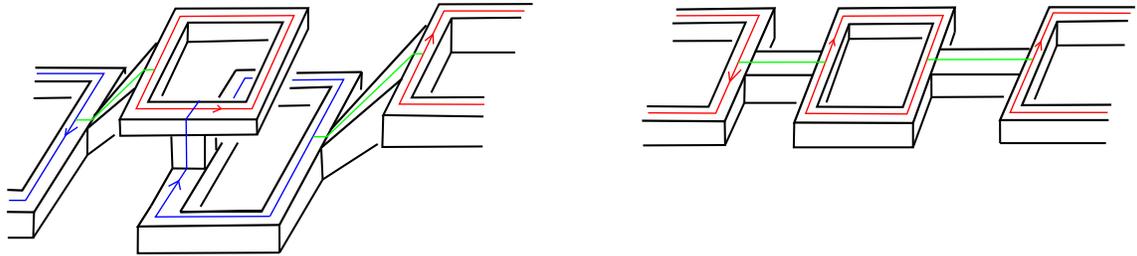}
 \end{center}
 \caption{Thickened annuli and bands}
 \label{thicken2}
\end{figure}

%section5
\section{A proof of Theorem~\ref{thm}} \label{proof}
In this section, we give a proof of Theorem~\ref{thm}. 
At first, we consider the condition for a connected closed orientable 3-manifold having a homologically fibered link whose homological fiber is given homeomorphic type using a surgery diagram. 
In order to define the linking number $lk(\cdot,\cdot)$ among knots in the surgery link, we have to give the link an orientation. 
We fix any one. However, this choice does not matter for the existence of a homologically fibered link whose fiber is the given homeomorphism type, see Remark \ref{choice}.

\begin{prop} \label{condition_surgery}
Let $M$ be a connected closed orientable 3-manifold. 
Suppose that $M$ is obtained from $S^3$ by the surgery along a link $L=L_{1}\sqcup \cdots \sqcup L_{m}$, where each $L_{i}$ is an oriented knot and its coefficient is $\frac{p_i}{q_i}$.\\
Then $M$ has a homologically fibered link whose homological fiber is homeomorphic to $\Sigma_{g,n+1}$ if and only if there exist $m \times (2g+n)$-matrix of integer coefficients $X$ and $(2g+n) \times (2g+n)$-symmetric matrix of integer coefficients $Y$ satisfying the following equation:
\begin{eqnarray} \label{surg}
   \left| \begin{array}{cc}
     \Phi    &  \Psi X  \\
      X^{t}    &  Y+(\mathcal{E}\oplus O_{n}) \\
  \end{array} \right| = \pm 1,
\end{eqnarray}
where $\Phi$ is an $m\times m$-matrix whose $(i,i)$-entry is $p_i$ and $(i,j)$-entry is $q_{i}lk(L_{i},L_{j})$ for $i\neq j$, $\Psi$ is an $m\times m$-diagonal matrix whose $(i,i)$-entry is $q_{i}$, and $\mathcal{E}$ is a $2g\times 2g$-matrix whose $(2i-1,2i)$-entry is $1$ for every $1\leq i\leq g$ and the others are $0$.
\end{prop}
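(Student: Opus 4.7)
The plan is to apply Proposition~\ref{correspondence} to translate the datum of a surface $F\cong\Sigma_{g,n+1}$ in $M$ into a tuple of annuli $A_{1},\dots,A_{2g+n}$ and bands $B_{1},\dots,B_{g},C_{1},\dots,C_{g+n-1}$ in $M$ satisfying the condition $\spadesuit$, and then to convert the homological fiber condition into the matrix equation~\eqref{surg} via an explicit presentation of $H_{1}(M\setminus F)$ read off from the surgery diagram. After a general position isotopy the annuli and bands may be taken disjoint from $L$, so the integers $X_{ij}:=lk(L_{i},A_{j})$ and $Y_{ij}:=lk(A_{i},A_{j})$ for $i\neq j$ (with $Y_{ii}$ the framing of $A_{i}$) are well-defined, and yield the matrices $X$ and $Y$ appearing in the statement.

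For the key calculation I would present $H_{1}(M\setminus F)$ as follows. Letting $N\cong F\times[-1,1]$ be the tubular neighbourhood from Section~\ref{surf_bands}, which is a handlebody of genus $2g+n$, Alexander duality gives $H_{1}(S^{3}\setminus(L\cup N))\cong\mathbb{Z}^{m+2g+n}$ with basis the meridians $\mu_{1},\dots,\mu_{m}$ of $L$ and the meridians $\nu_{1},\dots,\nu_{2g+n}$ of the annuli $A_{1},\dots,A_{2g+n}$. The $i$-th surgery relation $p_{i}\mu_{i}+q_{i}\lambda_{i}=0$ expands in this basis to $p_{i}\mu_{i}+q_{i}\sum_{j\neq i}lk(L_{i},L_{j})\mu_{j}+q_{i}\sum_{k}lk(L_{i},A_{k})\nu_{k}=0$, yielding $m$ relations whose combined coefficient matrix on the generators is exactly the top block $(\Phi\mid\Psi X)$ of~\eqref{surg}.

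I would then express the push-up of each spine loop $K_{i}$ in the same basis. The $\mu_{j}$-coefficient is $lk(L_{j},A_{i})=X_{ji}$, giving the $X^{t}$ block. The $\nu_{k}$-coefficient is controlled by the linkings and self-framings of the $A_{i}$'s, encoded by $Y$, together with the ``band detour'' correction from the Remark at the end of Section~\ref{fromknots}: in the construction of $K_{2i}$ the curve goes around the band $B_{i}$ and so picks up an extra meridian of $A_{2i-1}$. With the orientation conventions prescribed by $\spadesuit$, this produces precisely the block $Y+(\mathcal{E}\oplus O_{n})$, the $+1$ entries being exactly at the positions $(2i-1,2i)$. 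By the criterion following Definition~\ref{homologycob}, $F$ is a homological fiber iff these $2g+n$ push-up vectors together with the $m$ surgery-relation vectors form a $\mathbb{Z}$-basis of $\mathbb{Z}^{m+2g+n}$; stacking them yields exactly the matrix in~\eqref{surg}, so the condition is $\det=\pm 1$. For the converse I would realise any admissible pair $(X,Y)$ by disjoint framed circles in $S^{3}\setminus L$ with the prescribed mutual linkings and self-framings (a routine construction via local kinks and crossing changes), attach bands in the pattern dictated by $\spadesuit$ to produce a surface $F$ via Proposition~\ref{correspondence}, and invoke the above homology computation in reverse.

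The main obstacle I anticipate is the careful bookkeeping in the third step: verifying that the band $B_{i}$ contributes exactly the $(2i-1,2i)$-entry of $\mathcal{E}$ (and no other correction), with the sign dictated by the orientation conventions of $\spadesuit$. The symmetry of $Y$ is automatic from its interpretation as a linking matrix, but matching the correction $\mathcal{E}\oplus O_{n}$ precisely (rather than some transpose or sign variant) requires a careful reading of the band-attachment construction in Section~\ref{fromknots} and the interplay between the ``up'' and ``down'' push-off directions attached to $A_{2i-1}$ and $A_{2i}$.
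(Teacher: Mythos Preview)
Your proposal is correct and follows essentially the same approach as the paper: translate the surface into annuli and bands via Proposition~\ref{correspondence}, present $H_{1}(M\setminus F)$ on the meridians of the $L_i$ and $A_i$, read off the surgery relations as the top block $(\Phi\mid\Psi X)$ and the push-up classes of the spine curves as the bottom block, and identify the homological-fiber condition with the determinant condition. Regarding the obstacle you flag, the band $B_i$ actually contributes $-[m_{A_{2i-1}}]$ to the push-up of $K_{2i}$ (so the raw correction sits at position $(2i,2i-1)$ with sign $-1$); the paper absorbs this by defining the symmetric $Y$ with $Y_{2i-1,2i}=Y_{2i,2i-1}=\bar{l}_{2i-1,2i}-1$, so that $Y+\mathcal{E}$ recovers the asymmetric coefficient matrix exactly.
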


\begin{proof}
Suppose $M$ has a homologically fibered link whose homological fiber $F$ is homeomorphic to $\Sigma_{g,n+1}$. We will construct a solution of (\ref{surg}). 
By choosing a spine of $F$, and using the construction in Section \ref{surf_bands}, 
we get oriented annuli $A_{1},\dots,A_{2g+n}$ and oriented bands $B_{1},\dots,B_{g}$, $C_{1},\dots, C_{n}$ satisfying the condition $\spadesuit$. We regard these annuli and bands are in the surgery diagram on $S^3$ missing the surgery link. 
Let $T_{i}$ be the canonical longitude (regarded in $S^3$) of the core curve of $A_{i}$ and $m_{A_{i}}$ the meridian of (the core curve of) $A_{i}$ such that $lk(T_{i},m_{A_{i}})=1$ and $t_{i}$ the framing number of $A_{i}$ regarded as a framed knot. 
Note that $H_{1}(M\setminus F)\cong H_{1}\left( M\setminus \left( (\bigcup_{1\leq i \leq 2g+n}A_{i})\cup(\bigcup_{1\leq j \leq g}B_{j}))\cup(\bigcup_{1\leq l \leq n}C_{l}) \right) \right) \cong H_{1}\left( M\setminus(\bigcup_{1\leq i \leq 2g+n}A_{i}) \right)$, and that the push-up of $K_{i}$, which is in spines of $F$ as in Figure \ref{spine} represents\\
$\begin{cases}
[T_{2i'-1}]+t_{2i'-1}[m_{A_{2i'-1}}]\ \ {\rm if} \ 1\leq i \leq 2g \ {\rm and}\ i=2i'-1,\\
[T_{2i'}]+t_{2i'}[m_{A_{2i'}}]-[m_{A_{2i'-1}}]\ \ {\rm if} \ 1\leq i \leq 2g \ {\rm and}\ i=2i',\\
[T_{i}]+t_{i}[m_{A_{i}}]\ \ {\rm if} \ 2g+1\leq i \leq 2g+n\\
\end{cases}$\\
in $H_{1}\left( M\setminus(\bigcup_{1\leq i \leq 2g+n}A_{i})\right)$. \\
Let $m_{L_{i}}$ be the meridian of $L_{i}$ and set $l_{i,j}$ to be $lk(L_{i},L_{j})$ and $\tilde{l}_{i,j}$ to be $lk(L_{i},T_{j})$ and $\bar{l}_{i,j}$ to be $lk(T_{i},T_{j})$. 
Note that $H_{1}\left( S^{3}\setminus \left(L\cup (\bigcup_{1\leq i \leq 2g+n}A_{i})\right)\right)\cong \mathbb{Z}^{m+2g+n}$ is generated by $\{ [m_{L_{1}}],\dots,[m_{L_m}], [m_{A_1}],\dots,[m_{A_{2g+n}}]\}$, and that $\frac{p_i}{q_i}$-slope of $L_i$ represents an element $p_{i}[m_{L_{i}}]+q_{i}\{ \Sigma_{\substack{1\leq j\leq m\\ i\neq j}} l_{i,j}[m_{L_{j}}]+\Sigma_{1\leq k \leq 2g+n} \tilde{l}_{i,k}[m_{A_{k}}]\}$ in $H_{1}\left( S^{3}\setminus \left(L\cup (\bigcup_{1\leq i \leq 2g+n}A_{i})\right)\right)$, denoted by $\mathbb{A}_{i}$. 
Note also that the push-up of $K_{i}$, which is in a spine of $F$ as in Figure \ref{spine} represents \\
$\begin{cases}
t_{2i'-1}[m_{A_{2i'-1}}]+\Sigma_{1\leq j\leq m}\tilde{l}_{j,i}[m_{L_j}]+\Sigma_{\substack{1\leq k \leq 2g+n\\ k\neq i}} \bar{l}_{i,k}[m_{A_{k}}]\ \ {\rm if} \ 1\leq i \leq 2g \ {\rm and}\ i=2i'-1,\\
t_{2i'}[m_{A_{2i'}}]-[m_{A_{2i'-1}}]+\Sigma_{1\leq j\leq m}\tilde{l}_{j,i}[m_{L_j}]+\Sigma_{\substack{1\leq k \leq 2g+n\\ k\neq i}} \bar{l}_{i,k}[m_{A_{k}}]\ \ {\rm if} \ 1\leq i \leq 2g \ {\rm and}\ i=2i',\\
t_{i}[m_{A_i}]+\Sigma_{1\leq j\leq m}\tilde{l}_{j,i}[m_{L_j}]+\Sigma_{\substack{1\leq k \leq 2g+n\\ k\neq i}} \bar{l}_{i,k}[m_{A_{k}}]\ \ {\rm if} \ 2g+1\leq i \leq 2g+n\\
\end{cases}$\\
under the basis $\{ [m_{L_{1}}],\dots,[m_{L_m}], [m_{A_1}],\dots,[m_{A_{2g+n}}]\}$ of $H_{1}\left(S^{3}\setminus \left(L\cup (\bigcup_{1\leq i \leq 2g+n}A_{i})\right)\right)$, denoted by $\mathbb{B}_{i}$. 
Since $F$ is homological fiber, $H_{1}(M\setminus F)$ must be $\mathbb{Z}^{2g+n}$ and the push-ups of $K_{i}$'s form a free basis of $H_{1}(M\setminus F)$. 
This implies that $\{ [m_{L_{1}}],\dots,[m_{L_m}], [m_{A_1}],\dots,[m_{A_{2g+n}}]\}/\langle \mathbb{A}_{1},\dots ,\mathbb{A}_{m} \rangle$ is isomorphic to $\mathbb{Z}^{2g+n}$, which also implies that $\mathbb{A}_{1},\dots ,\mathbb{A}_{m}$ are linearly independent in $H_{1}\left(S^{3}\setminus \left(L\cup (\bigcup_{1\leq i \leq 2g+n}A_{i})\right) \right)$, and $\mathbb{B}_{j}$'s modulo $\langle \mathbb{A}_{1},\dots ,\mathbb{A}_{m} \rangle$ form a basis of it. 
Thus $\{ \mathbb{A}_{1},\dots,\mathbb{A}_m, \mathbb{B}_{1},\dots,\mathbb{B}_{2g+n}\}$ is a basis of $\mathbb{Z}^{m+2g+n}$ freely generated by $\{ [m_{L_{1}}],\dots,[m_{L_m}], [m_{A_1}],\dots,[m_{A_{2g+n}}]\}$. 
Thus considering the change of basis matrix, we get a solution of (\ref{surg}) by setting $X$ equal to a matrix whose $(i,j)$-entry is $\tilde{l}_{i,j}$, $Y$ equal to a symmetric matrix whose $(k,k)$-entry is $t_{k}$, $(2i-1,2i)$-entry and $(2i,2i-1)$-entry for $i\leq g$ are $\bar{l}_{2i-1,2i}-1$ and other $(i,j)$-entry is $\bar{l}_{i,j}$. \\

Conversely, suppose we have a solution $X$, $Y$ for (\ref{surg}). 
Then we can construct oriented framed knots $K_{1},\dots,K_{2g+n}$ in the surgery diagram on $S^3$, which will be the core curves of annuli $A_{1},\dots , A_{2g+n}$, such that $lk(L_{i},K_{j})$ is the $(i,j)$-entry of $X$, the framing of $K_{i}$ is the $(i,i)$-entry of $Y$, $lk(K_{2i-1},K_{2i})$ for $i\leq g$ is equal to $1$ plus ${\rm the}\ (2i-1,2i)$-entry of $Y$, and $lk(K_{i},K_{j})$ for other pair $(i,j)$ is the $(i,j)$-entry of $Y$. 
Take bands $B_{1},\dots, B_{g}, C_{1},\dots ,C_{2g+n-1}$ satisfying the condition $\spadesuit$. 
Note that we can choose bands arbitrarily since these do not affect on the homology. 
We get a surface which is homeomorphic to $\Sigma_{g,n+1}$ by the construction in Section~\ref{surf_bands}. 
By reversing the argument in the previous paragraph, we see that this surface is a homological fiber.
\end{proof}

\begin{rmk}\label{choice}
Suppose there is a solution $X$, $Y$ for the equation (\ref{surg}) in Proposition~\ref{condition_surgery} for some orientation of the surgery link. 
If we reverse the orientation of the $i$-th component of $m$-component surgery link, then the signs of the $i$-th row and $i$-th column of $\Phi$ except for the $(i,i)$-entry and the $(i,i)$-th entry of $\Psi$ in the left hand side of the equation (\ref{surg}) are switched. 
Then a pair of the matrix obtained from $X$ by switching the signs of the $i$-th row and $Y$ is a solution of the new equation. 
Therefore the existence of a solution of the equation (\ref{surg}) is preserved. 
\end{rmk}

\begin{rmk}
For a given surgery diagram and a solution of (\ref{surg}), we can concretely construct a homologically fibered link as in the latter part of the proof of Proposition~\ref{condition_surgery}. 
However we have many choices in construction: There are many links with linking each others in the given number and linking the surgery link in the given number, and we can arbitrary take bands. 
These homologically fibered links corresponding to one solution are not always different, i.e. they may be isotopic. 
Moreover, two homological fibered links corresponding to two different solution may be isotopic. 
\end{rmk}

\begin{proof}
[Proof of Theorem~\ref{thm}]
Suppose that $M$ is a connected closed oriented 3-manifold whose free part of the first homology group is isomorphic to $\mathbb{Z}^{r}$ and whose torsion linking form is equivalent to $(\bigoplus_{0\leq l\leq a}A^{p_l}(q_l))\oplus (\bigoplus_{0\leq i \leq e_0} E^{k_i}_{0}) \oplus (\bigoplus_{0\leq j\leq e_1}E^{k'_{j}}_1)$. \\
Then $\tilde{M}= \left(\#^{r}S^2\times S^1\right)\#\left(\#_{0\leq l\leq a}M\left(A^{p_l}(q_l)\right)\right)\# (\#_{0\leq i \leq e_0} M(E^{k_i}_{0})) \# (\#_{0\leq j\leq e_1}M(E^{k'_{j}}_1))$ has the isomorphic first homology  group to $M$ and has a torsion linking form equivalent to that of $M$ by Proposition~\ref{representative}. 
By Proposition~\ref{dependence}, $M$ has a homologically fibered link whose homological fiber is homeomorphic to $\Sigma_{g,n+1}$ if and only if $\tilde{M}$ has such homologically fibered link. 
We have a surgery diagram on $S^3$ for $\tilde{M}$ which consists of $r$ copies of an unknot with surgery slope $0$ (for $\#^{r}S^2\times S^1$), $a$ copies of an unknot with surgery slope $\frac{p_l}{-q_l}$ for $0\leq l\leq a$ (for $\#_{0\leq l\leq a}M\left(A^{p_l}(q_l)\right)$), $e_0$ copies of $2$-component link as in the right of Figure \ref{e^k_0_surg} for $0\leq i \leq e_0$ (for $\#_{0\leq i \leq e_0} M(E^{k_i}_{0})$), and $e_1$ copies of $2$-component link as in the right of Figure \ref{e^k_1_surg} for $0\leq j \leq e_1$ (for $\#_{0\leq j\leq e_1}M(E^{k'_{j}}_1)$). 
Applying Proposition~\ref{condition_surgery} with this surgery diagram with appropriate order, we get the equality (\ref{condition}).
\end{proof}

Almost the same statement as the next corollary was given by Nozaki.

\begin{cor}
Let $M$ be a connected closed orientable 3-manifold. 
Suppose that $M\#(S^{2}\times S^1)$ has a homologically fibered link whose homological fiber is homeomorphic to $\Sigma_{0,n+1}$ for $n\geq 1$ (resp. $\Sigma_{1,1}$). 
Then $M$ has a homologically fibered link whose homological fiber is homeomorphic to $\Sigma_{0,n}$ (resp. $\Sigma_{0,2}$).
\end{cor}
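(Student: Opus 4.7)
The plan is to apply Theorem~\ref{thm} to $M\#(S^2\times S^1)$ and show that the resulting solution of equation~(\ref{condition}) reduces, via a unimodular change of basis followed by two block Laplace expansions, to a solution of~(\ref{condition}) for $M$ with the smaller fiber. Since $S^2\times S^1$ contributes one free $\mathbb{Z}$-summand to $H_1$ and no torsion, the matrices for $M\#(S^2\times S^1)$ are $S=O_1\oplus S_M$ and $T=I_1\oplus T_M$, where $S_M, T_M$ are the analogous matrices for $M$ (of common size $N_M:=r+a+2e_0+2e_1$). Writing the unknown matrix as $X=\begin{pmatrix}x^{t}\\ X'\end{pmatrix}$ according to this extra summand, the top row of the big matrix in~(\ref{condition}) takes the form $(0,\ldots,0\mid x^{t})$.

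Expanding the determinant along this top row, every cofactor is an integer, so the hypothesis $\det=\pm 1$ forces $\gcd(x_1,\ldots,x_{2g+n})=1$. One can therefore find a unimodular $U\in GL_{2g+n}(\mathbb{Z})$ with $x^{t}U=(1,0,\ldots,0)$. The substitution $X\mapsto XU$---implemented by right-multiplying the last $2g+n$ columns of the big matrix by $U$ and simultaneously left-multiplying the last $2g+n$ rows by $U^{t}$---preserves the determinant, leaves the blocks $S$ and $T$ intact, and replaces $Y+\mathcal{E}$ by $U^{t}(Y+\mathcal{E})U$. No symmetry property of this transformed bottom-right block is needed for what follows.

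With the first row of the new $X$ equal to $(1,0,\ldots,0)$, two successive Laplace expansions---first along the top row of the big matrix (only the lone $1$ in the $TX$-block contributes), and then along the first column of the resulting minor (only the lone $1$ coming from the new $X^{t}$ contributes)---remove the $O_1/I_1$ row-column pair together with one column of $X$ and one row of $X^{t}$. What remains, still with determinant $\pm 1$, is the block matrix
\[
\begin{pmatrix} S_M & T_M \tilde X \\ \tilde X^{t} & \tilde Z\end{pmatrix},
\]
where $\tilde X$ consists of the last $2g+n-1$ columns of $X'U$ and $\tilde Z$ is the bottom-right $(2g+n-1)\times(2g+n-1)$ principal submatrix of $U^{t}(Y+\mathcal{E})U$.

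In the $\Sigma_{0,n+1}$ case one has $g=0$ and so $\mathcal{E}=0$; then $\tilde Z$ is a principal submatrix of $U^{t}YU$ and is symmetric. For $n\ge 2$ this is exactly equation~(\ref{condition}) for $M$ with fiber $\Sigma_{0,n}$, and Theorem~\ref{thm} delivers the desired link. For $n=1$ the reduced matrix is just $S_M$, and $\det S_M=\pm 1$ combined with the block shape $O_r\oplus P\oplus \bigoplus F^{k_i}_0\oplus \bigoplus F^{k'_j}_1$ forces $r=a=e_0=e_1=0$, so $M$ is an integral homology $3$-sphere and carries a fiber $\Sigma_{0,1}$ by the remark following Theorem~\ref{thm}. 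In the $\Sigma_{1,1}$ case the reduction leaves a $1\times 1$ scalar $\tilde Z$, which is automatically symmetric, recovering~(\ref{condition}) for $M$ with fiber $\Sigma_{0,2}$. The main delicate point is the careful bookkeeping of signs and block positions through the two Laplace expansions; the asymmetry of $\mathcal{E}$ in the $\Sigma_{1,1}$ case is harmless because only a $1\times 1$ entry of $U^{t}(Y+\mathcal{E})U$ is retained.
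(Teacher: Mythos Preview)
Your argument is correct and follows essentially the same strategy as the paper's proof: exploit the all-zero first row contributed by the extra $S^2\times S^1$ summand to see that the entries $x_1,\dots,x_{2g+n}$ are coprime, perform a unimodular change on the last $2g+n$ rows and columns to make that row $(0,\dots,0,1,0,\dots,0)$, and then reduce by two Laplace expansions. The differences are cosmetic: the paper works with Proposition~\ref{condition_surgery} applied to an arbitrary surgery diagram of $M$ enlarged by a $0$-framed unknot (rather than with Theorem~\ref{thm} and the canonical representative $\tilde M$), it carries out the unimodular change as an explicit Euclidean sequence of simultaneous row/column additions (rather than a single $U\in GL_{2g+n}(\mathbb{Z})$), and it disposes of the $n=1$ planar case up front via Remark~\ref{lowerbound} (rather than via $\det S_M=\pm1$). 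Your treatment of why the reduced bottom-right block is admissible---symmetry of a principal submatrix of $U^tYU$ when $g=0$, and automatic symmetry of a $1\times1$ block in the $\Sigma_{1,1}$ case---is exactly the point the paper alludes to when it remarks that the operation preserves ``solutions'' only for $\Sigma_{0,n+1}$ and $\Sigma_{1,1}$.
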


\begin{proof}
At first, for the case where $M\#(S^2\times S^1)$ has a homologically fibered link whose homological fiber is homeomorphic to $\Sigma_{0,2}$, we can see that $H_{1}(M\#(S^2\times S^1))\cong H_{1}(M)\oplus H_{1}(S^2\times S^1)$ is generated by one element by Remark \ref{lowerbound}. 
Thus $M$ is an integral homology $3$-sphere and has a homologically fibered link whose homological fiber is homeomorphic to $\Sigma_{0,1}$. 
In the following, for the case where $M\#(S^2\times S^1)$ has a homologically fibered link whose homological fiber is homeomorphic to $\Sigma_{0,n+1}$, we assume that $n\geq 2$.\\
Fix a surgery diagram $\mathcal{D}$ of $M$, and let $m-1$ be the number of knots in $\mathcal{D}$. 
A surgery diagram $\mathcal{D}'$ of $M\#(S^{2}\times S^1)$ is obtained by adding disjoint unknot $U$ with surgery slope $0(=\frac{0}{1})$ to $\mathcal{D}$. 
We give an order among the link in $\mathcal{D}'$ so that $U$ is the first component. 
Then by Proposition \ref{condition_surgery}, we have a solution of (\ref{surg}) for $\Sigma_{0,n+1}$ (resp. $\Sigma_{1,1}$) and fix it. 
Let $\mathbb{X}$ denote the matrix in the left hand side of (\ref{surg}) and let $\mathbb{X}_{i,j}$ denote the $(i,j)$-entry of $\mathbb{X}$. 
Note that $\mathbb{X}_{1,i}=\mathbb{X}_{i,1}=0$ for $1\leq i\leq m$, and that $\mathbb{X}_{1,i}=\mathbb{X}_{i,1}$ for $m+1\leq i\leq m+n$ (resp. $m+1 \leq i\leq m+2$). \\
Since (\ref{surg}) holds, $\mathbb{X}_{1,m+1}, \mathbb{X}_{1,m+2},\dots,\mathbb{X}_{1,m+n}$ (resp. $\mathbb{X}_{1,m+1}, \mathbb{X}_{1,m+2}$) are coprime. 
This implies that the (ordered) set $\{ \mathbb{X}_{1,m+1}, \mathbb{X}_{1,m+2},\dots,\mathbb{X}_{1,m+n}\}$ (resp. $\{ \mathbb{X}_{1,m+1}, \mathbb{X}_{1,m+2}\}$) can be changed into $n$-element set (resp. two-element set) such that the first element is $1$ or $-1$ and the other elements are $0$ in finitely many steps, at each of which the $l$-times of the $i$-th element is added to the $j$-th element for some integer $l$ and $1\leq i\neq j\leq n$ (resp. $1\leq i\neq j\leq 2$). 
Fix one of such sequence of steps. 
Along this sequence, at each step, say the $l$-times of the $i$-th element is added to the $j$-th element, we change $\mathbb{X}$ as follows: 
Add $l$ times the $(m+i)$-th column to the $(m+j)$-th column and then add $l$ times the $(m+i)$-th row to the $(m+j)$-th row. 
Note that at any time in the sequence, each matrix obtained from $\mathbb{X}$ is also a ``solution'' of (\ref{surg}). 
A solution for a homologically fibered link whose homological fiber is homeomorphic to neither $\Sigma_{0,n+1}$ nor $\Sigma_{1,1}$ is not necessarily preserved under this operation. 
At the end of the sequence we get a matrix $\mathbb{X}'$ obtained from $\mathbb{X}$ whose $(1,i)$-entry is $0$ for $i\neq m+1$ and $\pm1$ for $i=m+1$. 
Note that the $(i,1)$-entry of $\mathbb{X}'$ is $0$ for $i\neq m+1$ and $\pm1$ for $i=m+1$. \\
%Change $\mathbb{X}'$ to $\tilde{\mathbb{X}}$ as follows: If $2g+1\leq k \leq 2g+n$, we do nothing. 
%If $1\leq k\leq 2g$ and $k$ is odd, exchange the $(m+k+1)$-column and the $(m+k+2)$-column and then exchange the $(m+k+1)$-row and the $(m+k+2)$-row, exchange the $(m+k+2)$-column and the $(m+k+3)$-column and then exchange the $(m+k+2)$-row and the $(m+k+3)$-row, exchange the $(m+k+3)$-column and the $(m+k+4)$-column and then exchange the $(m+k+3)$-row and the $(m+k+4)$-row, $\dots$, exchange the $(m+2g-1)$-column and the $(m+2g)$-column and then exchange the $(m+2g-1)$-row and the $(m+2g)$-row. 
%If $1\leq k\leq 2g$ and $k$ is even, exchange the $(m+k-1)$-column and the $(m+k+2)$-column and then exchange the $(m+k-1)$-row and the $(m+k+2)$-row, exchange the $(m+k+2)$-column and the $(m+k+3)$-column and then exchange the $(m+k+2)$-row and the $(m+k+3)$-row, exchange the $(m+k+3)$-column and the $(m+k+4)$-column and then exchange the $(m+k+3)$-row and the $(m+k+4)$-row, $\dots$, exchange the $(m+2g-1)$-column and the $(m+2g)$-column and then exchange the $(m+2g-1)$-row and the $(m+2g)$-row. 
%This $\tilde{\mathbb{X}}$ is also a solution of (\ref{surg}). \\
Expand the determinant of $\mathbb{X}'$ along the first row (the only one cofacter survives) and expand the determinant of the cofacter along the first column (the only one cofacter survives). 
The cofacter finally obtained is a solution for $\Sigma_{0,n}$ (resp. $\Sigma_{0,2}$) in a 3-manifold which has a surgery diagram $\mathcal{D}$, representing $M$.

\end{proof}

%section6
\section{Examples}\label{sec_examples}
Though we have Theorem~\ref{thm}, it is difficult in general to find a solution of (\ref{condition}) for a given manifold and homeomorphic type of a surface. 
Nozaki \cite{nozaki} proved that $L(p,q)$ has a homologically fibered link whose homological fiber is homeomorphic to $\Sigma_{1,1}$ for every pair $(p,q)$ by solving some equations, which is equivalent to (\ref{condition}), using the density theorem. 
The author \cite{sekino} proved that $L(p,q)$ has a homologically fibered link whose homological fiber is homeomorphic to $\Sigma_{0,3}$ for every pair $(p,q)$ by solving (\ref{condition}) following the Nozaki's argument. 
Thanks to Nozaki \cite{nozaki}, we see that ${\rm hc}(M(A^{p}(q)))=1$ for every $(p,q)$. 
Moreover, he also proved in his thesis that ${\rm hc} \left ((\#^{2m}S^2\times S^1)\#M(A^{p}(q))\right) =m+1$, and ${\rm hc}\left( (\#^{2m+1}S^2\times S^1)\#M(A^{p}(q))\right) =m+1$ if $q$ or $-q$ is quadratic residue modulo $p$ and ${\rm hc}\left( (\#^{2m+1}S^2\times S^1)\#M(A^{p}(q))\right) =m+2$ otherwise for every non-negative integer $m$. 
Thus we know ${\rm hc}(\cdot)$ for all 3-manifolds whose torsion linking form is $A^{p}(q)$. 
In this section, we determine ${\rm hc}(\cdot)$ for 3-manifolds whose torsion linking forms are the other generators for linkings, $(\#^{r}S^2\times S^1)\#M(E^{k}_{0})$ and $(\#^{r}S^2\times S^1)\#M(E^{k}_{1})$ for a non-negative integer $r$, we state as a proposition, proved at Subsections~\ref{hcek0} and~\ref{hcek1}:

\begin{prop}In the following, $\lceil n \rceil$ and $\lfloor n \rfloor$ for a real number $n$ represent the minimal integer greater than or equal to $n$ and the maximal integer less than or equal to $n$, respectively.
\begin{itemize}
\item For $r\geq 0$, ${\rm hc}\left( (\#^{r}S^2\times S^1)\#M(E^{k}_{0})\right) =\lceil \frac{r}{2} \rceil+1$ if $k=1$ or $k\geq 3$.
\item ${\rm hc}(M(E^{2}_{0}))=2$, and for $r\geq 1$ ${\rm hc}\left( (\#^{r}S^2\times S^1)\#M(E^{2}_{0})\right) =\lceil \frac{r}{2} \rceil+1$.
\item For $r\geq 0$, ${\rm hc}\left( (\#^{r}S^2\times S^1)\#M(E^{2}_{1})\right) =\lceil \frac{r}{2} \rceil+1$.
\item For $r\geq 0$, ${\rm hc}\left( (\#^{r}S^2\times S^1)\#M(E^{k}_{1})\right) =\lfloor \frac{r}{2} \rfloor+2$ if $k\geq 3$.
\end{itemize}
\end{prop}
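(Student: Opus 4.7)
The proof of each equality splits into a lower bound and an upper bound on $\mathrm{hc}$. The easy half of the lower bound comes from Remark~\ref{lowerbound}: a homologically fibered knot with fiber $\Sigma_{g,1}$ forces $H_{1}(M)$ to be generated by at most $2g$ elements. In our families $H_{1}\bigl((\#^{r}S^{2}\times S^{1})\# M(E^{k}_{i})\bigr)\cong\mathbb{Z}^{r}\oplus(\mathbb{Z}/2^{k}\mathbb{Z})^{2}$ requires $r+2$ generators, so already $\mathrm{hc}\geq\lceil r/2\rceil+1$. This matches the claimed value in all cases except the two exceptional families $M(E^{2}_{0})$ (at $r=0$) and $(\#^{r}S^{2}\times S^{1})\# M(E^{k}_{1})$ for $k\geq 3$ with $r$ even, where the bound must be strengthened by one.

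To handle the exceptional cases I would unpack Theorem~\ref{thm} and combine a Schur-complement reduction of equation~(\ref{condition}) with a $2$-adic congruence. For $E^{k}_{1}$ one has $S=O_{r}\oplus F^{k}_{1}$ and $T=I_{r}\oplus G^{k}_{1}$, and a direct check gives the key identity
\[
(F^{k}_{1})^{-1}G^{k}_{1}=-\frac{1}{2^{k}}\begin{pmatrix}2&3\\3&6\end{pmatrix},
\]
which happens to be symmetric. A putative solution with $g=\lceil r/2\rceil+1$ forces some $r\times r$ submatrix of the top $r\times 2g$ block of $X$ to be unimodular (so that the rows coming from $O_{r}$ contribute to a $\pm1$ determinant). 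Eliminating that block reduces~(\ref{condition}) to a $(2+(2g-r))\times(2+(2g-r))$ determinant, and a further Schur complement against $F^{k}_{1}$ yields the equivalent condition
\[
\det\Bigl(X_{2}^{t}\begin{pmatrix}2&3\\3&6\end{pmatrix}X_{2}+2^{k}(\widetilde{Y}+\widetilde{\mathcal{E}})\Bigr)=\pm 1
\]
on a $(2g-r)\times(2g-r)$ block. When $r$ is even the residual block has size~$2$, and reducing modulo~$8$ gives $\det\equiv 3(\det X_{2})^{2}\pmod 8$; since $(\det X_{2})^{2}\in\{0,1,4\}\pmod 8$, the left-hand side lies in $\{0,3,4\}$ and is never $\pm 1\pmod 8$. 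When $r$ is odd the residual block has odd dimension and the asymmetric part of $\mathcal{E}$ contributes a linear term that escapes this trap, which accounts for the floor in the statement. An analogous computation for $E^{2}_{0}$ with $r=0$, based on $(F^{2}_{0})^{-1}=\tfrac14\bigl(\begin{smallmatrix}0&1\\1&0\end{smallmatrix}\bigr)$, produces $\det\equiv -(\det X)^{2}+4(ad+bc)\pmod 8$; since $ad+bc$ and $ad-bc=\det X$ always have the same parity, this residue again lies in $\{0,3,4\}\pmod 8$, never $\pm 1$, so $g=1$ is impossible. The extra $\mathbb{Z}$-summand available when $r\geq 1$ gives the needed slack to escape both obstructions.

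For the upper bounds I would exhibit explicit $(X,Y)$ satisfying~(\ref{condition}) at the claimed genus, built from very small unimodular blocks: for instance $X=I_{2}$, $Y=0$ realises $\det\bigl(\begin{smallmatrix}F^{k}_{0}&I_{2}\\I_{2}&\mathcal{E}\end{smallmatrix}\bigr)=\pm1$ when $k=1$, and similar direct constructions work for $E^{k}_{0}$, $k\geq 3$ and for $E^{2}_{1}$. The $r$ free $\mathbb{Z}$-summands of $H_{1}$ are absorbed by placing identity patterns along the diagonal of $X$, pairing them when $r$ is even and using one extra column when $r$ is odd. The slightly larger genus in the $E^{k}_{1},k\geq 3$ family with $r$ even provides exactly the extra $2g-r$ dimension denied by the $\bmod\ 8$ obstruction, so a direct construction matches the lower bound. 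The main technical obstacle will be the lower-bound analysis: one must carry out the Schur reduction on the $\mathbb{Z}^{r}$-block without losing $2$-adic precision, track how the asymmetric part of $\mathcal{E}$ couples into the reduced block (differently for even and odd $r$), and refine the congruence all the way to modulo~$8$, since stopping at mod~$2$ or mod~$4$ would miss the residue obstruction entirely.
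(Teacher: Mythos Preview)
Your overall architecture---the generator lower bound from Remark~\ref{lowerbound}, a $2$-adic obstruction in the two exceptional families, and explicit constructions for the upper bounds---is the same as the paper's. But the execution of the obstruction step contains a genuine gap, and the route is more convoluted than necessary.

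The gap is in your Schur reduction on the $O_{r}$-block. You assert that a solution at genus $g=\lceil r/2\rceil+1$ forces some $r\times r$ submatrix of the top $r\times 2g$ block of $X$ to be unimodular. That is false: what $\det=\pm1$ gives you is only that the \emph{gcd} of those $r\times r$ minors is $1$, not that any individual minor equals $\pm1$. For instance with $r=2$, $2g=4$ the matrix $X_{1}=\bigl(\begin{smallmatrix}2&0&3&0\\0&2&0&3\end{smallmatrix}\bigr)$ has minors $4,0,\pm 6,0,9$ with gcd~$1$ but none unimodular. So your elimination of the $O_{r}$-rows does not go through as written, and the subsequent reduction to a $2\times 2$ block is unjustified.

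The fix is to skip the two-step Schur reduction and argue directly, exactly as the paper does. The key observation you are missing is that in the relevant case ($r=2m$ even, $g=m+1$) the matrix $X$ is \emph{square} of size $r+2$. Since for $k\geq 3$ every entry of $S=O_{r}\oplus F^{k}_{1}$ is divisible by $8$, one has immediately
\[
\det\begin{pmatrix}S & TX\\ X^{t} & Y+\mathcal{E}\end{pmatrix}\ \equiv\ \det\begin{pmatrix}0 & TX\\ X^{t} & Y+\mathcal{E}\end{pmatrix}\ =\ (-1)^{r+2}\det(TX)\det(X^{t})\ =\ \det(T)(\det X)^{2}\ =\ 3(\det X)^{2}\pmod 8,
\]
which is never $\pm1$. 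No block elimination, no Schur complement, and $Y+\mathcal{E}$ drops out automatically. Your treatment of $M(E^{2}_{0})$ is essentially this (there $r=0$ so no $O_{r}$-issue arises); you have a sign slip---the residue is $+(\det X)^{2}+4(ad+bc)\equiv 5$, not $-(\det X)^{2}+4(ad+bc)\equiv 3$---but either value avoids $\pm1$, so the conclusion survives.

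On the upper bounds your strategy diverges from the paper's. You propose explicit $(X,Y)$ solving~(\ref{condition}); the paper instead works topologically, producing genuine \emph{fibered} (not merely homologically fibered) surfaces by plumbing fibered annuli of $S^{2}\times S^{1}$ and Hopf bands onto an explicit planar fiber $\Sigma_{0,3}\subset M(E^{k}_{0})$ and a genus-two fiber in $M(E^{k}_{1})$. This distinction matters for $(\#^{r}S^{2}\times S^{1})\# M(E^{2}_{0})$ with $r\geq 1$: since $M(E^{2}_{0})$ has no genus-one homological fiber, your diagonal ``identity-pattern'' stacking of a genus-one seed cannot reach $\lceil r/2\rceil+1$ here; the paper's plumbing onto the \emph{planar} fiber is what makes it work. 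Likewise, for $\mathrm{hc}\bigl((S^{2}\times S^{1})\# M(E^{k}_{1})\bigr)\leq 2$ with $k\geq 3$, the paper first produces a $\Sigma_{1,2}$ homological fiber in $M(E^{k}_{1})$ by reducing~(\ref{condition}) to Nozaki's lens-space theorem and then plumbs an $S^{2}\times S^{1}$ annulus---a step your sketch does not supply.
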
 

As a preparation, we review two observations: 
Firstly, the invariant ${\rm hc}(\cdot)$ is subadditive under the connected sum i.e. ${\rm hc}(M_{1}\#M_{2})\leq {\rm hc}(M_{1})+{\rm hc}(M_{2})$ since we get a homological fiber by the plumbing of two homological fibers, see \cite{sekino} for example. 
Secondly, it is known that $S^2\times S^1$ has a fibered link whose fiber surface is an annulus. 
By the plumbing it and a Hopf annulus in $S^3$, and by the plumbing two fibered annuli of $S^2\times S^1$, we get genus one fibered knots in $S^2\times S^1$ and $\#^{2}S^2\times S^1$. 
Since they are not integral homology $3$-spheres, we see that ${\rm hc}(S^2\times S^1)=1$ and ${\rm hc}\left( \#^{2}S^2 \times S^1\right) =1$.

\subsection{ ${\rm hc}\left( (\#^{r}S^2\times S^1)\#M(E^{k}_{0})\right) $}\label{hcek0}
We divide the argument into three cases, where $r$ is zero, where $r$ is positive even, and where $r$ is odd.
\subsubsection{${\rm hc}\left( M(E^{k}_{0})\right) $}\label{hcek0_r0}
We will compute ${\rm hc} \left( M(E^{k}_0)\right)$. 
Since $M(E^{k}_0)$ is not an integral homology 3-sphere, ${\rm hc}\left( M(E^{k}_0)\right) \geq 1$. 
Moreover, it is known that $M(E^{k}_0)$ has a fibered link whose fiber surface is homeomorphic to $\Sigma_{0,3}$ as in Figure~\ref{e^k_0_fib}: 
If we ignore the green curve, the right of Figure~\ref{e^k_0_fib} represents a fiber surface of a fibered link whose fiber surface is homeomorphic to $\Sigma_{0,3}$ in $L(2^{k},1)\#L(2^{k},1)$, which is obtained by the plumbing of fibered annuli in each prime components in appropriate way. 
The green curve is on this fiber, and note that the canonical framing of this curve is identical with the surface framing. 
Then $\left (-\frac{1}{2^{k}}\right)$-surgery (with respect to the canonical framing) along the green curve preserves the fiber structure. 
By the plumbings of two Hopf annuli, we get a genus two fibered knot in $M(E^{k}_0)$. 
This implies ${\rm hc}\left( M(E^{k}_0)\right) \leq2$. 
Thus it is enough to show whether $M(E^{k}_{0})$ has a genus one homologically fibered knot or not.\\

\begin{figure}[htbp]
 \begin{center}
  \includegraphics[width=150mm]{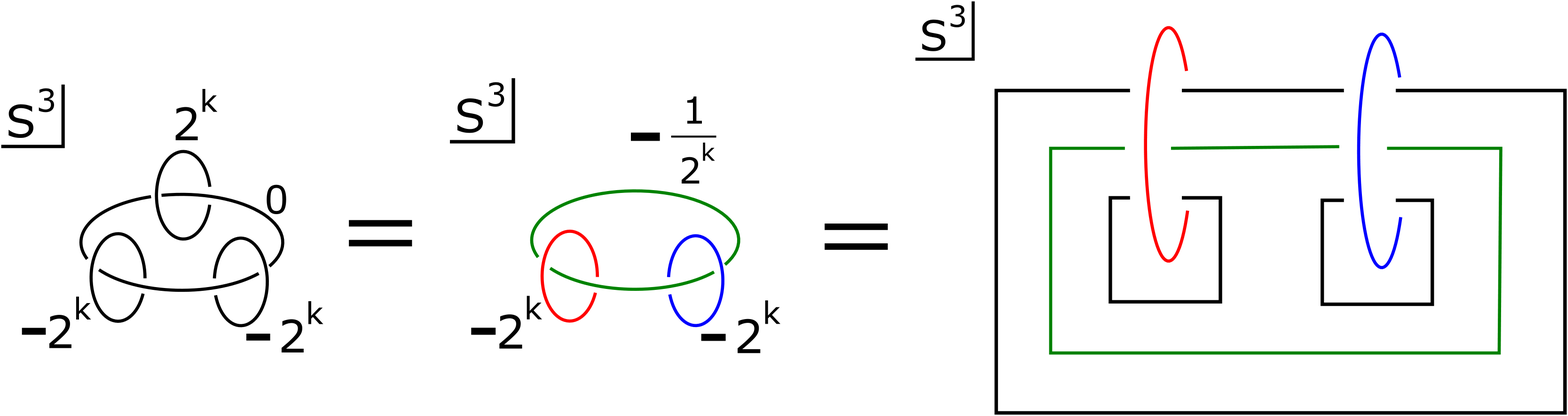}
 \end{center}
 \caption{Surgery link on a planar fiber surface}
 \label{e^k_0_fib}
\end{figure}

By Theorem~\ref{thm}, $M(E^{k}_0)$ has a genus one homoloically fibered knot if and only if there exist integers $x,y,z,w,\alpha,\beta$ and $\gamma$ satisfying 
\begin{eqnarray} \label{hc_mek0}
   \pm1 &=&
  \left| \begin{array}{cccc}
     0    &  2^{k} &  x  &  y  \\
     2^{k}    &  0  &   z  &  w \\
    x  &  z   &  \alpha  &  \beta +1\\
    y  &  w  &  \beta   &   \gamma \\
  \end{array} \right|   \nonumber \\
          &=& -2^{2k}\{ \alpha \gamma - \beta(\beta +1)\}
 +2^{k+1}\{x(z\gamma -w\beta)-y(z\beta -w\alpha)\}-2^{k}(xw+yz)+(xw-yz)^{2}
\end{eqnarray}

\vspace{0.5cm}
(i) $k=1$\\
In this case, it is known that $M(E^{1}_{0})$ has a genus one fibered knot as in Figure~\ref{e^1_0_fib}: 
If we ignore the green curve, the right of Figure~\ref{e^1_0_fib} represents a fiber surface of a genus one fibered knot in $L(2,1)\#L(2,1)$, which is obtained by the plumbing of fibered annuli in each prime components in appropriate way. 
The green curve is on this fiber, and note that the surface framing of this curve is the $(-1)$-slope with the canonical framing. 
This implies that the $\left (-\frac{1}{2}\right)$-slope of the green curve with respect to the canonical framing is the $\left( -\frac{1}{2}\right)$-slope with respect to the surface framing. 
Then the surgery along the green curve preserves the fiber structure. 
Thus $M(E^{1}_{0})$ has a genus one homologically fibered knot. 
In fact, $x=3,y=1,z=0,w=1,\alpha=-1,\beta=0$ and $\gamma=0$ is one of the solutions. 
This solution may correspond to a non-fibered knot. 

\begin{figure}[htbp]
 \begin{center}
  \includegraphics[width=100mm]{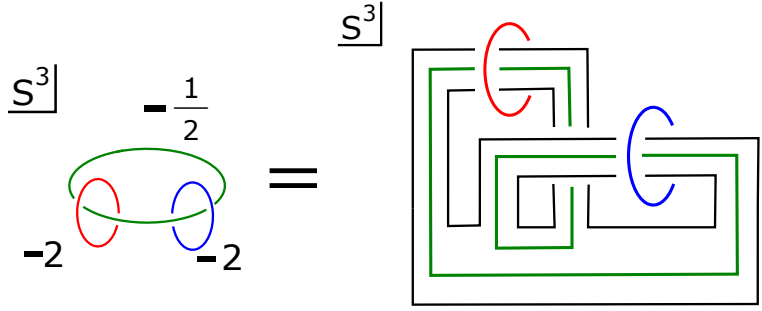}
 \end{center}
 \caption{Surgery link on a fiber surface of genus one}
 \label{e^1_0_fib}
\end{figure}

(ii) $k=2$\\
In this case, there exist no solutions. 
If there was, $(xw-yz)$ must be odd since the other terms in the right hand side of (\ref{hc_mek0}) are even. 
If $(xw-yz)$ is odd, then the right hand side of (\ref{hc_mek0}) is congruent to $5$ modulo $8$ by noting that $xw+yz=xw-yz+2yz$ and that the square of every odd integer is congruent to $1$ modulo $8$. 
This cannot be $\pm1$. 

(iii) $k\geq3$\\
In this case, $x=2^{k-1}, y=1, z=1, w=1, \alpha=2^{k-3}+1, \beta=0$ and $\gamma=0$ is one of the solutions of (\ref{hc_mek0}).

\subsubsection{${\rm hc}\left( (\#^{2m}S^2 \times S^1)\# M(E^{k}_{0})\right) $ for $m\geq 1$}\label{hcek0_evenr}
Since $H_{1}\left( (\#^{2m}S^2 \times S^1)\# M(E^{k}_{0}) \right) \cong \mathbb{Z}^{2m}\oplus \mathbb{Z}/2^{k}\mathbb{Z}\oplus \mathbb{Z}/2^{k}\mathbb{Z}$ requires at least $2m+2$ elements for generating, ${\rm hc}\left( (\#^{2m}S^2 \times S^1)\# M(E^{k}_{0})\right) \geq m+1$ by Remark~\ref{lowerbound}. 
By the plumbing of two fibered annuli of $S^2 \times S^1$ to a fiber surface of $M(E^{k}_{0})$ in the right of Figure~\ref{e^k_0_fib}, we get a genus two fibered knot in $(\#^{2}S^2 \times S^1)\# M(E^{k}_{0})$. 
Thus 
\begin{eqnarray*} \label{hc_mek0}
    {\rm hc}\left( (\#^{2m}S^2 \times S^1)\# M(E^{k}_{0})\right)  &\leq&
 (m-1){\rm hc}( \#^{2}S^2 \times S^1) + {\rm hc}\left( (\#^{2}S^2 \times S^1)\# M(E^{k}_{0})\right) \\
          &\leq& 
 m+1.
\end{eqnarray*}
Therefore, ${\rm hc}\left( (\#^{2m}S^2 \times S^1)\# M(E^{k}_{0})\right) =m+1$ for $m\geq 1$.

\subsubsection{${\rm hc}\left( (\#^{2m+1}S^2 \times S^1)\# M(E^{k}_{0})\right) $ for $m\geq 0$} \label{hcek0_oddr}
Since $H_{1}\left( \left( (\#^{2m+1}S^2 \times S^1)\# M(E^{k}_{0})\right) \right) \cong \mathbb{Z}^{2m+1}\oplus \mathbb{Z}/2^{k}\mathbb{Z}\oplus \mathbb{Z}/2^{k}\mathbb{Z}$ requires at least $2m+3$ elements for generating, ${\rm hc}\left( (\#^{2m+1}S^2 \times S^1)\# M(E^{k}_{0})\right) \geq m+2$ by Remark~\ref{lowerbound}. 
By the plumbing of two fibered annuli, one is in $S^2 \times S^1$ and the other is in $S^3$, to a fiber surface of $M(E^{k}_{0})$ in the right of Figure~\ref{e^k_0_fib}, we get a genus two fibered knot in $(S^2 \times S^1)\# M(E^{k}_{0})$. 
Thus 
\begin{eqnarray*} \label{hc_mek0}
   {\rm hc}\left( (\#^{2m+1}S^2 \times S^1)\# M(E^{k}_{0})\right)  &\leq&
  m\cdot {\rm hc}( \#^{2}S^2 \times S^1) +{\rm hc}\left( (S^2 \times S^1)\# M(E^{k}_{0})\right)\\
          &\leq& 
 m+2.
\end{eqnarray*}
Therefore, ${\rm hc}\left( (\#^{2m+1}S^2 \times S^1)\# M(E^{k}_{0})\right) =m+2$ for $m\geq 0$.

\subsection{${\rm hc}\left( (\#^{r}S^2\times S^1)\#M(E^{k}_{1})\right)$}\label{hcek1}
We divide the argument into four cases, where $r$ is zero, where $k$ is two, where $k$ is not two and $r$ is even, and where $k$ is not two and $r$ is odd.

\subsubsection{${\rm hc}\left( M(E^{k}_{1})\right) $}
We will compute ${\rm hc}(M(E^{k}_{1}))$ for $k\geq2$. 
Since $M(E^{k}_1)$ is not an integral homology 3-sphere, ${\rm hc}\left( M(E^{k}_{1})\right) \geq 1$. 
Moreover, $M(E^{k}_1)$ has a genus two fibered knot as in Figure~\ref{e^k_1_fib}: 
If we ignore the red curve and the blue curve, two of the bottom of Figure~\ref{e^k_1_fib} represent fiber surfaces of genus two fibered knots in $L(2^{k},1)\#L(2^{k},1)\#L\left( -\frac{2}{3} \{2^{k-1}-(-1)^{k-1}\}, 1\right)\#L\left(2,(-1)^{k-1}\right)$ for odd $k$ and even $k$, which are obtained by the plumbings of fibered annuli in each prime components in appropriate way. 
The red curve and blue curve are on this fiber. 
Note that the $0$-slope of the red curve with respect to the canonical framing is the $(-1)$-slope with respect to the surface framing, and that the $\left((-1)^{k}\cdot2\right)$-slope of the blue curve with respect to the canonical framing is the $1$-slope with respect to the surface framing. 
Then the surgery along the red curve and the blue curve preserves the fiber structure. 
Therefore we have ${\rm hc}\left( M(E^{k}_1)\right) \leq 2$. 
Thus it is enough to show whether $M(E^{k}_{1})$ has a genus one homologically fibered knot or not.\\

\begin{figure}[htbp]
 \begin{center}
  \includegraphics[width=120mm]{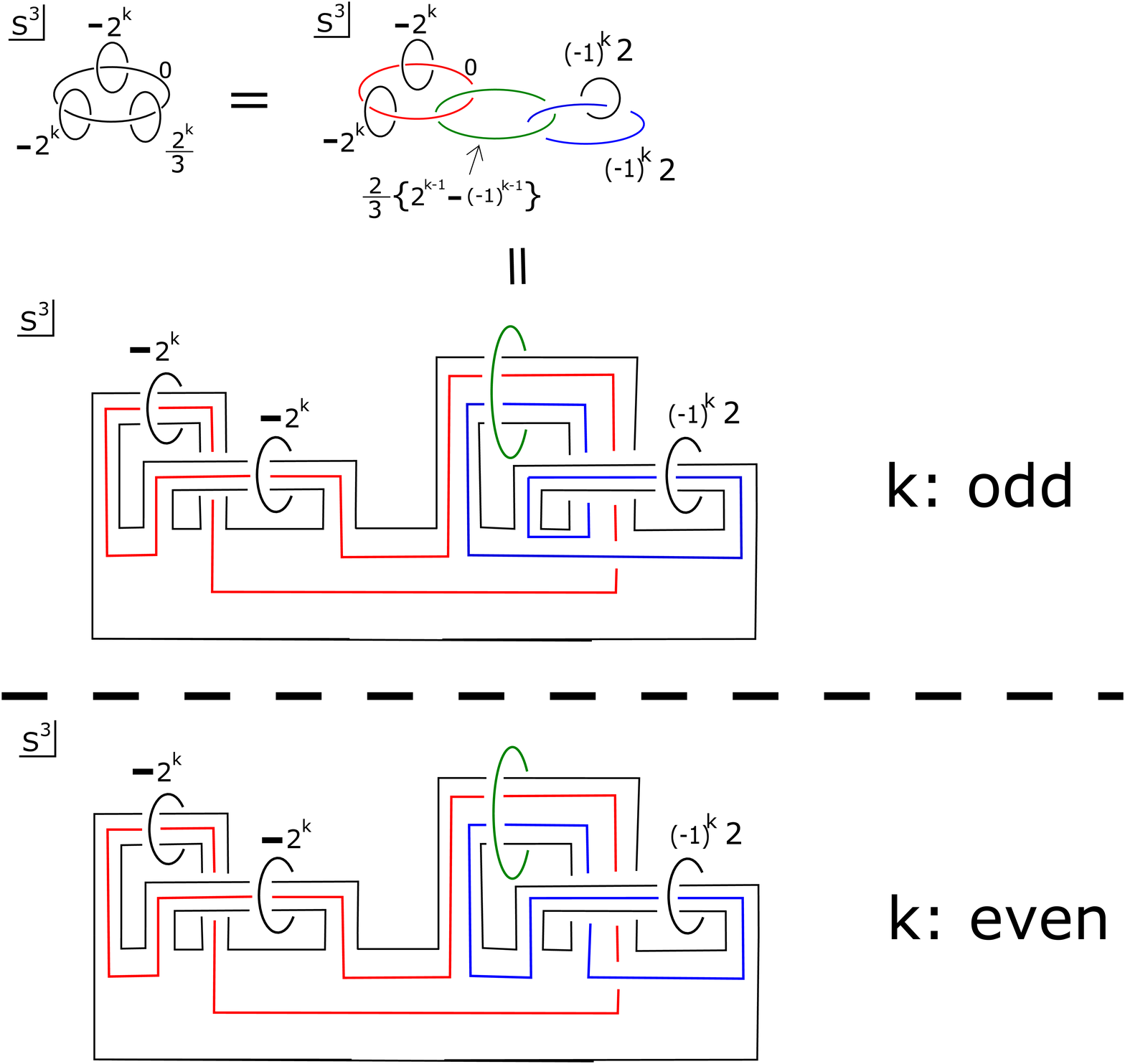}
 \end{center}
 \caption{Surgery link on a fiber surface of genus two}
 \label{e^k_1_fib}
\end{figure}

By Theorem~\ref{thm}, $M(E^{k}_1)$ has a genus one homologically fibered knot if and only if there exist integers $x,y,z,w,\alpha,\beta$ and $\gamma$ satisfying 
\begin{eqnarray} \label{hc_mek1}
   \pm1 &=&
  \left| \begin{array}{cccc}
     2^{k+1}    &  -2^{k} &  -x  &  -y  \\
     -3\cdot2^{k}    &  2^{k+1}  &   -3z  &  -3w \\
    x  &  z   &  \alpha  &  \beta +1\\
    y  &  w  &  \beta   &   \gamma \\
  \end{array} \right|   \nonumber \\
       &=& 2^{2k}\{ \alpha \gamma - \beta(\beta +1)\}
 +2^{k+1}\{\gamma(x^{2}+3xz+3z^{2})+\alpha(y^{2}+3yw+3w^{2})-(2\beta+1)(xy+3zw)\} \nonumber\\   
 &\ & \hspace{2.0cm} -3\cdot2^{k}(2\beta+1)(xw+yz)+3(xw-yz)^{2}
\end{eqnarray}

\vspace{0.5cm}
(i) $k=2$\\
In this case, we get a genus one fibered knot as in Figure~\ref{e^2_1_fib}: 
If we ignore the green curve, the right of Figure~\ref{e^2_1_fib} represents a fiber surface of a genus one fibered knot in $L(4,1)\#L(4,1)$, which is obtained by the plumbing of fibered annuli in each prime components in appropriate way. 
The green curve is on this fiber, and note that the surface framing of this curve is the $(-1)$-slope with the canonical framing. 
This implies that the $\left (-\frac{3}{4}\right)$-slope of the green curve with respect to the canonical framing is the $\left( {1}{4}\right)$-slope with respect to the surface framing. 
Then the surgery along the green curve preserves the fiber structure. 
Thus $M(E^{2}_{1})$ has a genus one homologically fibered knot. 
In Fact, $x=-1,y=1,z=1,w=0,\alpha=1,\beta=0$ and $\gamma=0$ is one of the solutions. 
This solution may correspond to a non-fibered knot.
Thus we have ${\rm hc}\left( M(E^{2}_{1})\right) =1$. \\

\begin{figure}[htbp]
 \begin{center}
  \includegraphics[width=150mm]{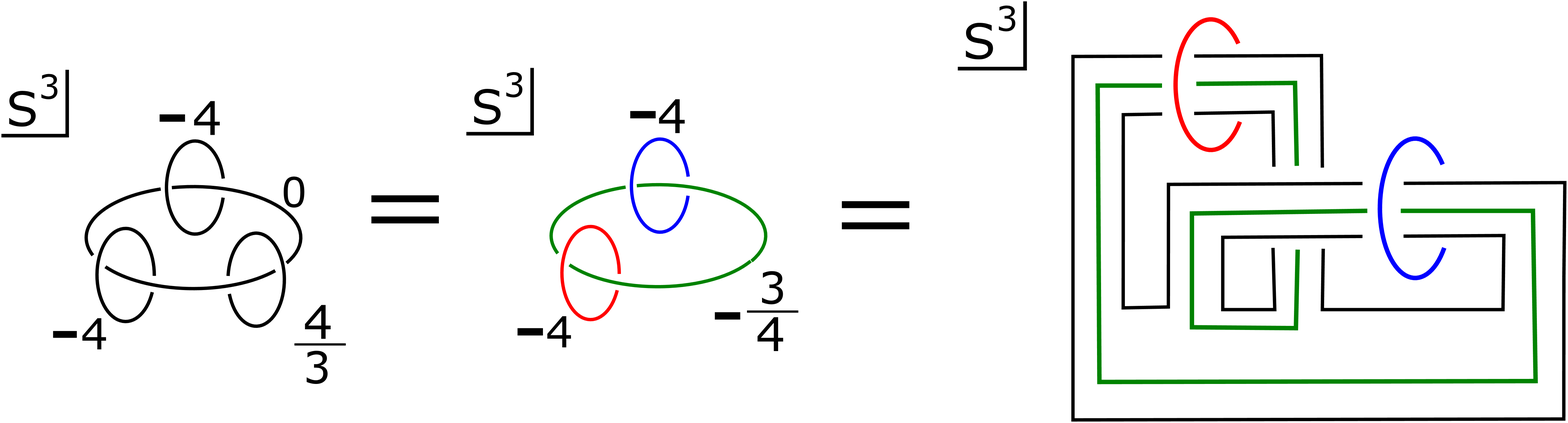}
 \end{center}
 \caption{Surgery link on a fiber surface of genus one}
 \label{e^2_1_fib}
\end{figure}

(ii) $k\geq3$\\
In this case, there exist no solutions of (\ref{hc_mek1}). 
In fact, all terms but the last in the right hand side of (\ref{hc_mek1}) are divisible by $8$ and the last term is congruent to neither $1$ nor $-1$ modulo $8$. 
Thus we have ${\rm hc}\left( M(E^{k}_{1})\right) \geq2$. 
We conclude ${\rm hc}\left( M(E^{k}_{1})\right) =2$. 

\subsubsection{${\rm hc}\left( (\#^{r}S^2\times S^1)\#M(E^{2}_{1})\right) $ for $r\geq 0$}
Since $H_{1}\left( \left( (\#^{r}S^2 \times S^1)\# M(E^{2}_{1})\right) \right) \cong \mathbb{Z}^{r}\oplus \mathbb{Z}/4\mathbb{Z}\oplus \mathbb{Z}/4\mathbb{Z}$ requires at least $r+2$ elements for generating, 
${\rm hc}\left( (\#^{r}S^2 \times S^1)\# M(E^{2}_{1})\right) \geq \lceil \frac{r}{2} \rceil +1$ by Remark~\ref{lowerbound}. 
On the other hand, 
\begin{eqnarray*} \label{hc_mek0}
   {\rm hc}\left( (\#^{r}S^2 \times S^1)\# M(E^{2}_{1})\right)  &\leq&
   {\rm hc}( \#^{r}S^2 \times S^1) + {\rm hc}\left( M(E^{2}_{1})\right) \\  
          &\leq& 
 \lceil \frac{r}{2} \rceil +1.
\end{eqnarray*}
Therefore, ${\rm hc}\left( (\#^{r}S^2 \times S^1)\# M(E^{2}_{1})\right) =\lceil \frac{r}{2} \rceil +1$ for $r\geq 0$.

\subsubsection{${\rm hc}\left( (\#^{2m}S^2\times S^1)\#M(E^{k}_{1})\right) $ for $m\geq 0$ and $k\geq 3$}
Note that 
\begin{eqnarray*} \label{hc_mek0}
  {\rm hc}\left( (\#^{2m}S^2\times S^1)\#M(E^{k}_{1})\right)  &\leq&
  m\cdot {\rm hc}(\#^{2}S^2\times S^1)+  {\rm hc}\left( M(E^{k}_{1})\right) \\  
          &=& 
 m+2.
\end{eqnarray*}
We will show that $(\#^{2m}S^2\times S^1)\#M(E^{k}_{1})$ has no homologically fibered links whose homological fibers are homeomorphic to $\Sigma_{m+1,1}$. 
This implies that ${\rm hc}\left( (\#^{2m}S^2\times S^1)\#M(E^{k}_{1})\right)=m+2$.\\
% since if $(\#^{2m}S^2\times S^1)\#M(E^{k}_{1})$ had a homological fiber of genus less than $m+1$, it also had that of genus $m+1$ by the plumbings of some genus one fiber surfaces of $S^{3}$. \\
Suppose that $(\#^{2m}S^2\times S^1)\#M(E^{k}_{1})$ had a homologically fibered links whose homological fibers are homeomorphic to $\Sigma_{m+1,1}$. 
By applying Theorem~\ref{thm}, we have a solution $X$, $(2m+2) \times (2m+2)$-integer matrix, and $Y$, $(2m+2) \times (2m+2)$-symmetric integer matrix, for an equation below.

\begin{eqnarray} \label{abcdef}
   \left| \begin{array}{cc}
     O_{2m} \oplus   
                           \left( \begin{array}{cc}
                             2^{k+1}     &  -2^{k}  \\
                             -3\cdot 2^{k}    &  2^{k+1} \\
                           \end{array}  \right)

  & \left( I_{2m} \oplus \left( \begin{array}{cc}
                        -1    &  0  \\
                         0    &  -3 \\
                        \end{array} \right) \right)X  \\
      X^{t}    &  Y+\mathcal{E}\\
  \end{array} \right| = \pm 1
\end{eqnarray}

Note that the left hand side is congruent to $\left| X^{t}\left( I_{2m} \oplus \left( \begin{array}{cc}
                        -1    &  0  \\
                         0    &  -3 \\
                        \end{array} \right) \right) X \right| =3 \left| X\right| ^{2}$ modulo $8$. 
This cannot be $\pm1$, and this leads a contradiction.

\subsubsection{${\rm hc}\left( (\#^{2m+1}S^2\times S^1)\#M(E^{k}_{1})\right) $ for $m\geq 0$ and $k\geq 3$}
First, we show that $M(E^{k}_{1})$ has a homologically fibered link whose homological fiber is homeomorphic to $\Sigma_{1,2}$. 
By Theorem~\ref{thm}, the existence of such a link is equivalent to that of a solution $a,b,c,d,e,f,v,w,x,y,z$ of the equation below. 

\begin{eqnarray} \label{qwerty}
   \left| \begin{array}{ccccc}
     2^{k+1} & -2^{k} & -x & -y & -u \\
     -3\cdot 2^{k} & 2^{k+1}   &  -3z  &  -3w  &  -3v \\
    x  &  z  &  a  &  b+1  & d  \\
    y  & w  &  b  &  c  &  e  \\
    u  & v  &  d   &  e  &  f\\
  \end{array} \right| = \pm 1
\end{eqnarray}

By substituting $u=1, v=d=e=f=0$, the left hand side of (\ref{qwerty}) becomes equal to 
$\left| \begin{array}{ccc}

 2^{k+1}   &  -3z  &  -3w  \\
   z  &  a  &  b+1   \\
    w  &  b  &  c   \\
  \end{array} \right|$. 
Note that the existence of $a,b,c,z,w$ such that the determinant becomes $\pm 1$ is equivalent to that of a genus one homologically fibered knot in $L(2^{k+1},3)$ by Theorem~\ref{thm}, and the existence is guaranteed by Nozaki. 
Thus $M(E^{k}_{1})$ has a homologically fibered link whose homological fiber is homeomorphic to $\Sigma_{1,2}$. 
Moreover, we get a homologically fibered link whose homological fiber is homeomorphic to $\Sigma_{2,1}$ in $(S^2\times S^1)\#M(E^{k}_{1})$ by the plumbing with a fibered annulus in $S^2\times S^1$. \\

Since $H_{1}\left( \left( (\#^{2m+1}S^2 \times S^1)\# M(E^{k}_{1})\right) \right) \cong \mathbb{Z}^{2m+1}\oplus \mathbb{Z}/2^{k}\mathbb{Z}\oplus \mathbb{Z}/2^{k}\mathbb{Z}$ requires at least $2m+3$ elements for generating, ${\rm hc}\left( (\#^{2m+1}S^2 \times S^1)\# M(E^{k}_{1})\right) \geq m+2$ by Remark \ref{lowerbound}.  
On the other hand, 
\begin{eqnarray*} \label{hc_mek0}
  {\rm hc}\left( (\#^{2m+1}S^2 \times S^1)\# M(E^{k}_{1})\right) &\leq&
  m\cdot {\rm hc}( \#^{2}S^2 \times S^1)+ {\rm hc}\left( (S^2 \times S^1)\# M(E^{k}_{1})\right)\\  
          &\leq& 
m+2.
\end{eqnarray*}
Therefore, ${\rm hc}\left( (\#^{2m+1}S^2 \times S^1)\# M(E^{k}_{1})\right) =m+2$ for $m\geq 0$.

\vspace{0.5cm}

\ GRADUATE SCHOOL OF MATHEMATICAL SCIENCES, THE UNIVERSITY OF TOKYO, 3-8-1 KOMABA, MEGURO--KU, TOKYO, 153-8914, JAPAN\\
\ \ E-mail address: \texttt{sekinonozomu@g.ecc.u-tokyo.ac.jp}
\end{document}